\let\oldtocsection=\tocsection
\let\oldtocsubsection=\tocsubsection
\let\oldtocsubsubsection=\tocsubsubsection
\renewcommand{\tocsection}[2]{\hspace{0em}\oldtocsection{#1}{#2}}
\renewcommand{\tocsubsection}[2]{\hspace{1em}\oldtocsubsection{#1}{#2}}
\renewcommand{\tocsubsubsection}[2]{\hspace{2em}\oldtocsubsubsection{#1}{#2}}
\def\natural  #1{{\mathbb N^{#1}}}
\def\lap      {\Delta }
\def\equationcolor {\color{black}}
\def\textcolor     {\color{black}}
\def\bcoleq    {\begin{equation}\equationcolor}
\def\ecoleq    {\textcolor\end{equation}}
\def\bcoleqn   {\equationcolor\begin{eqnarray}}
\def\ecoleqn   {\end{eqnarray}\textcolor}
\def\C        {\mathbb{C}}
\def\S        {\mathbb{S}}
\def\gind{\operatorname{g}}
\def\R{\mathbb{R}}
\newcommand{\disp}{\displaystyle}
\newcommand{\eps}{\varepsilon}
\DeclareMathOperator*{\trace}{tr}
\newtheorem{theorem}{Theorem}[section]
\newtheorem{mythm}{Theorem}
\newtheorem{lemma}[theorem]{Lemma}
\newtheorem{proposition}[theorem]{Proposition}
\theoremstyle{definition}
\newtheorem*{assumption*}{$\lambda_{1}$-Condition}
\newtheorem{remark}[theorem]{Remark}
\def\pproof#1{\@ifnextchar[\opargproof
{\opargproof[\it Proof of #1.]}}
\def\opargproof[#1]{\par\noindent {\bf #1 }}
\numberwithin{equation}{section}
\begin{document}

\title[Solitons for the MCF]{On mean curvature flow solitons in the sphere }

\author[M. Magliaro]{\textsc{M. Magliaro}}
\author[L. Mari ]{\textsc{L. Mari}}
\author[F. Roing]{\textsc{F. Roing}}
\author[A. Savas-Halilaj]{\textsc{A. Savas-Halilaj}}

\address{Marco Magliaro \newline
Dipartimento di Scienza e Alta Tecnologia,
Universit\`a degli Studi dell'Insubria,
22100 Como, Italy\newline
{\sl e-mail:} {\bf marco.magliaro@uninsubria.it}
}

\address{Luciano Mari \newline
Dipartimento di Matematica ``Federigo Enriques",
Universit\`a degli Studi di Milano,
20133 Milano, Italy\newline
{\sl e-mail:} {\bf luciano.mari@unimi.it}
}

\address{Fernanda Roing \newline
Dipartimento di Matematica ``Giuseppe Peano",
Universit\`a degli Studi di Torino,
10123 Torino, Italy\newline
{\sl e-mail:} {\bf fernanda.roing@unito.it}
}

\address{Andreas Savas-Halilaj\newline
{Department of Mathematics,
Section of Algebra \!\&\! Geometry, \!\!
University of Ioannina,
45110 Ioannina, Greece} \newline
{\sl E-mail:} {\bf ansavas@uoi.gr}
}

\renewcommand{\subjclassname}{  \textup{2000} Mathematics Subject Classification}
\subjclass[2000]{Primary 53C43, 58E20, 53C24, 53C40, 53C42, 57K35}
\keywords{Mean curvature flow, solitons, Hopf vector fields}
\thanks{A. Savas-Halilaj is supported by (HFRI) Grant No:14758. L. Mari is supported by the PRIN 20225J97H5 ``Differential-geometric aspects of manifolds via Global Analysis"}
\parindent = 0 mm
\hfuzz     = 6 pt
\parskip   = 3 mm
\date{}

\begin{abstract}
In this paper, we consider soliton solutions of the mean curvature flow in the unit sphere $\S^{2n+1}$ moving
along the integral curves of the Hopf unit vector field. While such solitons must necessarily be
minimal if compact, we produce a non-minimal, complete example  with topology
$\S^{2n-1} \times \R$. The example wraps around a Clifford torus $\S^{2n-1} \times \S^1$ along each end, it has reflection and rotational symmetry and its mean curvature
changes sign on each end. Indeed, we prove that a complete 2-dimensional soliton with
non-negative
mean curvature outside a compact set must be a covering of a Clifford torus.
Concluding, we obtain a pinching theorem under suitable conditions on the second
fundamental form.
\end{abstract}

\maketitle
\setcounter{tocdepth}{1}
\section{Introduction}

Let $M$ and $N$ be Riemannian manifolds and let $f_t: M\to N$, $t\in(0,T)$,
be a one parameter family of isometric immersions. Then $\{f_t\}_{t\in(0,T)}$
is called a solution of the {\em mean curvature flow} (MCF) if it satisfies the
evolution equation
\begin{equation}\label{MCF}
\dfrac{\partial f_t}{\partial t}= \textbf{\em H}_{f_t},
\end{equation}
where $\textbf{\em H}_{f_t}$ is the unnormalized mean curvature vector field of the immersion
$f_t$. We often say that the immersed submanifolds $f_t(M)$ move by mean curvature flow.

Interesting solutions to \eqref{MCF} include minimal submanifolds, which are stationary for the flow, and submanifolds moving along the integral curves of Killing vector fields. More precisely, assume that $\xi$ is Killing on $N$ with associated flow of isometries $\varphi:N\times\R\to N$, and let $f:M\to N$ be an isometric immersion satisfying 
\begin{equation}\label{solitonsol}
\textbf{\em H}=\xi^{\perp},
\end{equation}
where $\{\cdot\}^{\perp}$ denotes the orthogonal projection on the normal bundle of $M$. Then, for $t \in \R$ the submanifolds $f_t(M)=\varphi(f(M),t)$ move by mean curvature flow, see for example \cite{hunger1,hunger2}. Solutions
to \eqref{solitonsol} are called solitons for the MCF with respect to $\xi$, or just solitons. In particular, these are {\em eternal solutions} to the MCF.
In the case where $N$ is the Euclidean space and $\xi$ is
a constant vector field, then solutions to \eqref{solitonsol} are called {\em translating
solitons} in direction $\xi$.

We focus on the case where $N$ is the round unit sphere. There are several results concerning the convergence of the MCF for submanifolds
in $\mathbb{S}^{n+1}$, see for example \cite{andrews2,andrews,bryan_ivaki_scheuer,huisken,huisken2,nguyen1,baker,pipoli}.
In recent years there has been increasing interest in \emph{compact} ancient and eternal solutions. In particular, we mention that:
\begin{enumerate}
\item[(a)]
Huisken \& Sinestrari \cite[Theorem 6.1]{huisken_sine} proved under suitable pinching conditions that ancient solutions to the MCF are shrinking spherical caps or
equators.
\item[(b)]
Choi \& Mantoulidis \cite[Theorem 1.1]{choi_mantoulidis} proved a gap phenomenon for
ancient mean curvature flows of submanifolds: if the area of $f_t$ is close 
enough to that of a totally geodesic great sphere as $t \to -\infty$, then the flow must be a steady or shrinking 
sphere. In \cite[Corollary 1.5]{choi_mantoulidis}, for surfaces in
$\mathbb{S}^3$ they obtained a stronger result recovering the case of a steady or shrinking
Clifford torus, provided that the area of $f_t$ does not exceed $2\pi^2+\delta$ with $\delta$ small enough.
\item[(c)]
Chen \& Gaspar \cite{chen_gaspar,chen} constructed an eternal solution to the MCF (in Brakke's sense) in $\mathbb{S}^{n+1}$  connecting an equatorial sphere to a Clifford torus
$\S^1\times\S^{n-1}$.
\item[(d)]
Bryan \& Louie \cite{bryan} proved that the only embedded ancient solutions to the curve
shortening flow on $\S^2$ are equators or shrinking circles, starting at an equator at time
$t=-\infty$ and collapsing to the north pole at time $t=0$.
Bryan, Ivaki \& Scheuer established in \cite{bryan_ivaki_scheuer} analogous results for more general
mean curvature type flows.
\end{enumerate}

In this work we investigate
solutions $f:M^{2n} \to \mathbb{S}^{2n+1}$ to \eqref{solitonsol} in the case where $\xi$ is a unit Killing vector field.
Such fields only exist in odd dimensional spheres and arise from the complex structure of
the corresponding ambient space. More precisely, $\xi$ can be written in the form $J\nu$ where $\nu$ is the unit normal
of $\S^{2n+1}\subset\C^{n+1}$ and $J$ a linear complex structure of $\C^{n+1}$. The
geometry of such vector fields was initiated by Hopf \cite{hopf} and they
are known in the literature as {\em Hopf vector fields}. For this reason we call
{\em Hopf solitons} the solutions of \eqref{solitonsol} when $\xi$ is the Hopf vector field.
It turns out (see Proposition \ref{prop_div_xiT} below) that the scalar mean curvature $H$ of a Hopf soliton satisfies the equation
\[
\operatorname{div}(\xi^{\top})=H^2.
\]
Consequently, by the divergence theorem every compact boundaryless Hopf soliton must be a minimal hypersurface that is tangent to
the Hopf fibration. Among them we underline the minimal Clifford tori
\begin{equation}\label{eq_clifford_gen}
	T_{a,b} = \mathbb{S}^{a} \left( \sqrt{\frac{a}{2n}}\right) \times \mathbb{S}^b \left( \sqrt{\frac{b}{2n}}\right) \subset \R^{a+1} \times \R^{b+1} = \R^{2n+2}, \qquad a+b = 2n,
\end{equation}
appropriately rotated to make $\xi$ tangent to them. In particular, one can prove that the Clifford torus $T_{1,1}$ is  the only complete minimal surface of the $3$-sphere which is tangent to the Hopf vector field; see for instance \cite{pinkall} or the more general Proposition \ref{prop_clifford} below.
However, in higher dimensions, there exist plenty of other examples, which arise by lifting complete minimal
hypersurfaces in the complex projective space via the Hopf fibration. To find non-minimal examples, we shall therefore consider complete, non-compact solitons.
One way to construct examples is by rotating special
curves. As our first main result, in Section \ref{rotationally}, we construct the following example.
\begin{mythm}\label{THMA}
For each $n \ge 1$, there exists a complete, non-minimal Hopf soliton $f : M \to \S^{2n+1}$ which is 
diffeomorphic to $\S^{2n-1} \times \R$ and has the following properties:
\begin{enumerate}
	\item[(a)] $f(M)$ wraps around the same Clifford torus $T_{2n-1,1}$ along each end of $M$, and the mean curvature of $M$ changes sign on each end;
	\item[(b)] there exists an equator $E$ containing a totally geodesic $\mathbb{S}^{2n-1}$ focal to $T_{2n-1,1}$, such that $f(M)$ is symmetric with respect to the reflection in $E$.
\end{enumerate}
\end{mythm}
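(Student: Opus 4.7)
\textbf{Rotationally symmetric ansatz.} The plan is to construct the soliton via a rotationally symmetric profile. Writing $\C^{n+1} = \C^{n} \oplus \C$ so that $J$ preserves the splitting, I would look for $U(n)$-equivariant immersions of the form
$$
f(u, s) = \bigl( \sin \theta(s) \, u, \; \cos\theta(s) \, e^{i\alpha(s)}\bigr), \qquad u \in \S^{2n-1},\; s \in \R,
$$
with $\theta(s) \in (0, \pi/2)$. The $U(n)$-action on the $\C^{n}$-factor, together with the gauge $\S^{1}$-symmetry of the ambient Hopf flow (which acts on the ansatz by $\alpha \mapsto \alpha + t$), should reduce the scalar soliton equation $H = \langle \xi, \nu \rangle$ to an autonomous second-order ODE system in the pair $(\theta, \alpha)$. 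I expect a Noether-type first integral $c$ (a conserved ``angular momentum'' associated to the Hopf $\S^{1}$-symmetry) so that, after an energy integration, the problem reduces to a single first-order equation
$$
(\theta')^{2} \;=\; 2 \bigl( E - V_{c}(\theta) \bigr)
$$
on the planar phase space $(\theta, \theta')$, for an effective potential $V_{c}$ on $(0, \pi/2)$.

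\textbf{Closed orbit around the Clifford torus.} The equilibria $\theta \equiv \theta_{*}$ of the reduced system should correspond exactly to the Clifford tori $T_{a, 2n-a}$ of \eqref{eq_clifford_gen} to which $\xi$ is tangent; by Proposition~\ref{prop_div_xiT} these are minimal. The distinguished value $\sin^{2} \theta_{0} = (2n-1)/(2n)$ picks out $T_{2n-1, 1}$. For a suitable conservation constant $c_{0}$ I would verify the strict inequality $V''_{c_{0}}(\theta_{0}) > 0$, so that $\theta_{0}$ is a non-degenerate local minimum of $V_{c_{0}}$ and hence a \emph{center} of the planar Hamiltonian flow. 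Each energy $E$ slightly above $V_{c_{0}}(\theta_{0})$ then gives a non-constant periodic solution $\theta : \R \to [\theta_{\min}, \theta_{\max}] \subset (0, \pi/2)$ oscillating about $\theta_{0}$. I would fix one such orbit and place the origin $s = 0$ at a turning point of $\theta$ with $\alpha(0) = 0$.

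\textbf{Verification of the asserted properties.} Since $\theta$ remains in a compact subinterval of $(0, \pi/2)$, the associated $f$ is a complete smooth immersion of $M \cong \S^{2n-1} \times \R$ into $\S^{2n+1}$; it is non-minimal by the non-constancy of $\theta$. The conservation law together with $c_{0} \neq 0$ forces $\alpha$ to have non-zero average rate over one $\theta$-period, so $|\alpha(s)| \to \infty$ as $|s| \to \infty$, and $f(M)$ lies in a compact tube around $T_{2n-1, 1}$ and wraps around it infinitely often on each end. The evenness in $s$ of the reduced ODE and our choice of initial data give $\theta(-s) = \theta(s)$, $\alpha(-s) = -\alpha(s)$, which translates into invariance of $f(M)$ under the ambient isometric reflection $(w, z) \mapsto (w, \bar z)$ of $\S^{2n+1}$; its fixed equator $E = \S^{2n+1} \cap \{z \in \R\}$ contains the totally geodesic $\S^{2n-1} = \{z = 0\}$ focal to $T_{2n-1, 1}$. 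For the sign-change of the mean curvature, note that on a small normal neighbourhood of $T_{2n-1, 1}$, $H$ is to leading order a Jacobi-type linear functional of the normal displacement; since the profile crosses $\theta_{0}$ each period while alternating sides of $T_{2n-1, 1}$, $H(s)$ takes opposite signs on the two sides and hence changes sign infinitely often on each end of $M$.

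\textbf{Main obstacle.} The crux of the argument is the identification of the conserved quantity $c$ and, more delicately, the verification of $V''_{c_{0}}(\theta_{0}) > 0$, which distinguishes a \emph{center} from a saddle. This amounts to linearizing the Hopf soliton equation around $T_{2n-1, 1}$ along normal directions that break the Hopf fibration, a direct but intricate computation involving the Jacobi operator of the Clifford torus. Once this ``center'' behaviour is in place, the existence of the periodic orbit and the remaining geometric conclusions follow from standard phase-plane arguments combined with the symmetry considerations above.
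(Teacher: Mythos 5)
Your ansatz is the right one (after the substitution $u=\cos r=\sin\theta$ it coincides with the paper's profile \eqref{def_map_f}), and the reduction to a planar autonomous system is exactly what the paper does in \eqref{pb}. The fatal problem is the Hamiltonian structure you postulate. The Hopf one-form $\omega=\langle\cdot\,,\xi\rangle$ is a contact form, not closed, so the soliton equation $H=\langle\xi,\nu\rangle$ is not the Euler--Lagrange equation of a weighted area functional and there is no Noether first integral. Concretely, the quantity $\zeta=u^{2n-1}\sqrt{1-u^2-(u')^2}$, which \emph{is} the classical conserved quantity for rotationally symmetric \emph{minimal} hypersurfaces, here satisfies the strict monotonicity $\zeta'=u^{2n-1}(u')^2\ge 0$ (see \eqref{int1}); it is a Lyapunov function and immediately excludes every periodic orbit of \eqref{pb}. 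Equivalently, the linearization at the Clifford equilibrium $p_n=(\sqrt{1-1/2n},0)$ has eigenvalues $(-1\pm i\sqrt{32n^2-1})/(2\sqrt{2n})$ by \eqref{roots}, with strictly negative real part: $p_n$ is a stable spiral, not a center, so the family of closed orbits about $\theta_0$ on which your entire construction rests does not exist. Your key step, verifying $V''_{c_0}(\theta_0)>0$, cannot succeed because there is no effective potential $V_{c_0}$: the drift term $-v\sqrt{1-u^2-v^2}$ coming from the soliton forcing destroys the conservative structure.

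The paper's construction is therefore necessarily different: it takes the trajectory of \eqref{pb} issuing from the boundary point $(u,v)=(1,0)$ of the phase domain --- geometrically, a totally geodesic $\S^{2n-1}$ along which the profile closes up with vanishing principal curvatures --- shows (Facts 1--3) that it is defined for all $s>0$ and spirals \emph{into} $p_n$ as $s\to\infty$ (this spiralling, with $H=-u'=-v$ oscillating in sign, is what produces both the wrapping around $T_{2n-1,1}$ and the sign change of $H$ on the end), and then doubles the resulting half-soliton across the equator fixed by the reflection $x_{2n+1}\mapsto -x_{2n+1}$, obtaining a complete $C^2$ and, by elliptic regularity, smooth hypersurface diffeomorphic to $\S^{2n-1}\times\R$. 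The genuinely delicate points are the existence and $C^1$ extension of the solution up to the boundary point $(1,0)$, where the right-hand side of \eqref{pb} is not Lipschitz, and the smooth gluing at $s=0$; neither appears in your plan, and the periodic-orbit mechanism you propose in their place is unavailable.
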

\begin{remark}\label{rem_wraps}
In the statement of item (a) of Theorem \ref{THMA}, the assertion  ``$f(M)$ wraps around the same Clifford torus $T_{2n-1,1}$ along each end of $M$" is to be understood as follows: there exists $R\in\R$ such that $f(\S^{2n-1}\times(\R\setminus[-R,R]))$ can be written as a multigraph over a fixed Clifford torus $T_{2n-1,1}$ and the supremum of the $C^1$-norms of the graph functions among all the leaves tends to zero as $R\to\infty$. 
\end{remark}

The sign changing property of the mean curvature is not a coincidence, as our second main
result points out. Similarly to the case of translators in the Euclidean space, we are able
to characterize the mean convex complete examples in $\S^3$. 

\begin{mythm}\label{THMB}
Let $f:M^{2}\to\S^{3}$ be a complete Hopf soliton without boundary. If there exists a compact set $K$ such that $H$ does not change sign on any component of $M \backslash K$, then $M^{2}$ is isometric to a covering of a minimal Clifford torus $T_{1,1}$.
\end{mythm}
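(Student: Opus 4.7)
My plan is to derive, and then exploit, an elliptic equation for the scalar mean curvature $H$ on $M$. Differentiating the soliton identity $H = \langle \xi, \nu \rangle$ using that $\xi$ is Killing on $\S^{3}$ together with the Weingarten formula $\nabla^{\S^{3}}_X \nu = -AX$, one obtains
\[
\nabla H \;=\; -\,\tau \;-\; A\xi^\top,
\]
where $\tau := (J\nu)^{\top}$ is the component of $J\nu$ tangent to $M$ (well-defined since $J\nu + Hp$ lies in $TM$), and $A$ is the shape operator. Taking a further divergence and invoking the Codazzi equations in the constant-curvature ambient, together with the crucial two-dimensional identity $\trace(J^\top \circ A) = 0$ (the trace of a skew-symmetric composed with a symmetric endomorphism of $T_pM \cong \R^{2}$) and the computation $\operatorname{div} \tau = 2H$, one arrives at
\[
\Delta H + \langle \xi^\top, \nabla H\rangle + \bigl(|A|^2 + 2\bigr) H \;=\; 0 \qquad \text{on } M.
\]
This is a linear elliptic equation with non-negative zeroth-order coefficient $|A|^2 + 2 \ge 2$.

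Now let $E$ be a connected component of $M \setminus K$ where, after possibly changing sign, $H \ge 0$. Writing the PDE as $\mathcal{L} H = -(|A|^2+2)H \le 0$ with $\mathcal{L} u := \Delta u + \langle \xi^\top, \nabla u\rangle$ a drift Laplacian \emph{without} zeroth-order term, the strong Hopf maximum principle yields two alternatives: either $H \equiv 0$ on $E$, or $H > 0$ throughout $E$. In the first case, unique continuation for the linear elliptic operator $\mathcal{L} + (|A|^2+2)$ (with smooth coefficients) forces $H \equiv 0$ on the whole connected manifold $M$; then $\xi$ is everywhere tangent to $M$, $M$ is minimal, and Proposition \ref{prop_clifford} concludes that $M$ is isometric to a covering of the Clifford torus $T_{1,1}$.

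The heart of the proof is to rule out the second alternative $H > 0$ on $E$. I would combine the divergence identity $\operatorname{div}(\xi^\top) = H^2$ from Proposition \ref{prop_div_xiT} with a test-function argument: for a compactly supported cutoff $\varphi \in C^\infty_c(M)$, integration by parts together with $|\xi^\top| \le 1$ gives
\[
\int_M \varphi\, H^2 \, dA \;=\; -\int_M \langle \xi^\top, \nabla \varphi\rangle\, dA \;\le\; \int_M |\nabla \varphi|\, dA.
\]
Choosing $\varphi = \varphi_R$ with $\varphi_R \equiv 1$ on an intrinsic ball $B_R \subset M$, $\operatorname{supp} \varphi_R \subset B_{2R}$ and $|\nabla\varphi_R| \le 2/R$, the question reduces to controlling the area growth of $M$. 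This is the main technical obstacle: to obtain it, I would multiply the elliptic PDE for $H$ by $H$, integrate against $\varphi_R$ to produce a Caccioppoli-type estimate bounding $\int \varphi_R |\nabla H|^2 + \int \varphi_R (|A|^2 + 2) H^2$ in terms of boundary data, and combine this with the Gauss equation $K = 1 + \det A$ and Gauss--Bonnet applied to $B_R \subset M^2$. Together these should bootstrap a subcritical area-growth bound which, inserted in the displayed inequality, forces $\int_E H^2 = 0$; since $H > 0$ on the open set $E$, this is the desired contradiction, reducing the analysis to the previous case.
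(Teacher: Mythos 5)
Your reduction of the problem on a component $E$ of $M\setminus K$ to the dichotomy ``$H\equiv 0$ or $H>0$'' via the strong maximum principle, and the treatment of the first alternative by unique continuation followed by Proposition \ref{prop_clifford}, is correct and coincides with the paper's strategy. A minor remark: your drifted equation carries the zeroth-order coefficient $|A|^2+2$ rather than the $|A|^2$ of \eqref{lap-H}; the extra $2=\Ric_{\mathbb{S}^3}(\nu,\nu)$ is what the Jacobi-type computation produces, and in any case only the sign of the coefficient matters for the maximum principle, so this discrepancy is harmless here.

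The genuine gap is in the only hard step, namely excluding $H>0$ on a noncompact end. Your plan — integrate $\div(\xi^{\top})=H^2$ against cutoffs to get $\int_{B_R}H^2\le \tfrac{2}{R}\operatorname{Area}(B_{2R}\setminus B_R)$ and then establish the needed sublinear growth $\operatorname{Area}(B_{2R}\setminus B_R)=o(R)$ via a Caccioppoli estimate plus Gauss--Bonnet — is a declaration of intent rather than an argument. Nothing in the sketch produces that bound: a complete surface in $\mathbb{S}^3$ carries no a priori area-growth control; the Caccioppoli inequality for $H$ bounds $\int\varphi_R|\nabla H|^2$ and $\int\varphi_R(|A|^2+2)H^2$ by quantities that themselves involve the area of the annulus (so the ``bootstrap'' is circular as stated); and Gauss--Bonnet on $B_R$ introduces the geodesic curvature of $\partial B_R$, which is likewise uncontrolled. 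The paper closes this step by an entirely different mechanism (Theorem \ref{THMC}): the conformal change $\overline{\gind}=H^{2\beta}\gind$, the lower bound $\overline{K}\ge 1-\beta/(2\delta)>0$, a Fischer-Colbrie-type second-variation inequality along minimizing $\overline{\gind}$-rays proving completeness of $\overline{\gind}$ (the delicate point, occupying most of Section 4.1), and finally Bonnet--Myers to force $\overline{E}$ compact, contradicting that $E$ is an end. To complete your proof you would need either to carry out such a conformal/stability argument or to supply an actual proof of the area-growth estimate; as written, the proposal does not prove the theorem.
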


Theorem \ref{THMB} is a consequence of the following more general statement:

\begin{mythm}\label{THMC}
	Let $f:M^{2}\to\S^{3}$ be a complete Hopf soliton without boundary, and let $K \subset M$ be a compact set. If the mean curvature of a  connected component $E$ of $M \backslash K$ does not change sign and is not identically zero, then $E$ is relatively compact.
\end{mythm}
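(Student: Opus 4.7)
The plan is to assume by contradiction that $E$ is not relatively compact and, after reversing the unit normal if necessary, that $H \ge 0$ on $E$. First I would establish the linear elliptic equation satisfied by $H$ on any Hopf soliton,
\[
\Delta H + \langle \xi^{\top}, \nabla H\rangle + (|A|^{2} - 2)H = 0,
\]
obtained by expanding $\Delta\langle\xi, \nu\rangle$ on the hypersurface using that $\xi$ is Killing in constant curvature $1$ (so that $\nabla^{2}\xi(X,Y) = R(\xi, X)Y$), Codazzi (to collapse $\sum_{i}\overline{\nabla}_{e_{i}}(Ae_{i})$ into $\nabla H + |A|^{2}\nu$), and the antisymmetry of $\overline{\nabla}\xi$ against the symmetry of $A$ (which cancels the mixed terms). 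Because $H \ge 0$ and $H \not\equiv 0$ on the connected open set $E$, the strong maximum principle---in the form valid without sign restriction on the zero-order coefficient, applied to the nonpositive maximum of $-H$---forces $H > 0$ in the interior of $E$.

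Next I would combine the divergence identity $\operatorname{div}(\xi^{\top}) = H^{2}$ of Proposition \ref{prop_div_xiT} with the pointwise bound $|\xi^{\top}|^{2} = 1 - H^{2} \le 1$. Fix $p \in M$, let $B_{R}$ denote the intrinsic geodesic ball of radius $R$ and set $\Omega_{R} := E \cap B_{R}$. Since $\partial E \subset \partial K$ is compact, the divergence theorem on $\Omega_{R}$ yields
\[
\int_{\Omega_{R}} H^{2}\, dA \;\le\; C_{0} + L(E \cap \partial B_{R}),
\]
with $C_{0} := L(\partial E)$ independent of $R$, and by coarea $L(E \cap \partial B_{R})$ equals the derivative of $\mathrm{Area}(\Omega_{R})$ for almost every $R$.

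The decisive step, where I expect the main obstacle, is to convert these bounds into a contradiction using genuine two-dimensional structure. Because $H > 0$ on the interior of $E$, the Hopf field $\xi$ is nowhere tangent to $f(E)$, so the Hopf projection $(\pi\circ f)|_{E}\colon E \to \S^{2}$ is a local diffeomorphism; in the adapted orthonormal frame $\tau := \xi^{\top}/|\xi^{\top}|$, $\tau^{\perp}$, a direct frame calculation shows $dA_{(\pi\circ f)^{*}g_{\S^{2}}} = H\, dA_{E}$, so that $\int_{E}H\, dA_{E}$ is controlled by the multiplicity of $(\pi\circ f)|_{E}$ times $\mathrm{Area}(\S^{2})$. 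The plan is to combine this with the Gauss identity $|A|^{2} = H^{2} + 2 - 2K_{M}$, Gauss--Bonnet on $\Omega_{R}$, and the divergence bound, and then choose via coarea a sequence $R_{k}\to\infty$ along which the boundary quantities $L(E\cap\partial B_{R_{k}})$ and $\int_{E\cap\partial B_{R_{k}}}|\nabla H|$ remain controlled, so as to close the integral identities and force $H \equiv 0$ on the end. The real difficulty is that neither $|A|$ nor $\chi(\Omega_{R})$ is a priori bounded, so this closure most plausibly requires a blow-up analysis along a sequence $p_{n}\to\infty$ in $E$, producing a limit Hopf soliton which, by the Clifford-torus rigidity of Proposition \ref{prop_clifford}, must coincide with a covering of $T_{1,1}$---contradicting the sign-definite positivity of $H$ on $E$ transported to the limit.
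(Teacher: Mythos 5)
Your preliminary steps are sound and partially coincide with the paper's: reducing to $H\ge 0$ on $E$, invoking the drifted equation for $H$ together with the strong maximum principle to get $H>0$ in the interior of $E$, and recording $\operatorname{div}(\xi^{\top})=H^{2}$. (A small correction: the identity is $\Delta_{\xi^{\top}}H=-H|A|^{2}$, i.e.\ \eqref{lap-H}; your zero-order coefficient $|A|^{2}-2$ carries a spurious curvature term, though this does not affect the maximum principle step.) The genuine gap is the ``decisive step'', which you yourself flag as the main obstacle and which does not close for two concrete reasons. First, the Hopf projection controls $\int_{E}H\,dA$ only through the covering multiplicity of $\pi\circ f$ on $E$, and nothing bounds that multiplicity: an end can wrap around a Clifford torus infinitely many times, as the example of Theorem \ref{THMA} actually does. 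Second, and more seriously, the proposed endgame is not a contradiction: a sequential or blow-up limit of an end with $H>0$ can perfectly well be a covering of the minimal Clifford torus with $H\equiv 0$, since only nonnegativity, not strict positivity, passes to the limit. Theorem \ref{THMA} is precisely the cautionary example --- its ends do accumulate on the minimal Clifford torus with $H\to 0$ --- and the content of Theorem \ref{THMC} is that $H$ must change sign along the way, a fact invisible to the limit object. As outlined, your scheme cannot distinguish the hypothetical mean-convex end from the genuinely occurring oscillating one.

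The paper's proof takes an entirely different route that supplies exactly the missing quantitative mechanism. On $E$, where $H>0$, it performs the conformal change $\overline{\gind}=H^{2\beta}\gind$ in the spirit of Fischer-Colbrie: using \eqref{lap-H}, \eqref{xitop}, the Gauss equation and Young's inequality, the Gauss curvature of $\overline{\gind}$ is bounded below by the positive constant $1-\beta/(2\delta)$ for suitable $\beta,\delta$; completeness of $\overline{\gind}$ is then forced by testing the second variation inequality along a $\overline{\gind}$-minimizing ray with functions $\varphi=H^{\beta}\psi$ and a cutoff, which yields $\int H^{\beta}\,ds=\infty$; Bonnet-Myers then makes $\overline{E}$ compact, the desired contradiction. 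If you wish to pursue your own route, you would need an analogous integral inequality exploiting $H>0$ together with \eqref{lap-H}; the Gauss-Bonnet and coarea bookkeeping alone, with $|A|$ unbounded and $\chi(\Omega_{R})$ uncontrolled, does not produce one.
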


To prove Theorem \ref{THMC},  we perform a conformal change of the metric of $M^2$ by an appropriate power of the mean curvature, and adapt a beautiful technique
pioneered  by Fischer-Colbrie \cite{colbrie} (see also Schoen \& Yau \cite{schoen_yau_cmp}, Shen \& Ye \cite{shen_ye_min}, Shen \& Zhu \cite{shen_zhu_MathAnn} and Catino, Mastrolia \& Roncoroni \cite{catino}) to show that $\overline{E}$ is compact. Unfortunately, our method
seems not sufficient to treat complete Hopf solitons in dimensions greater than two.
It would be interesting to know if there are complete Hopf solitons on one side of the Clifford torus, or
in a closed half-sphere. The latter would be an analogue of Nadirashvili's construction \cite{nad}.

The Clifford torus can also be characterized among Hopf solitons whose second fundamental form satisfies the bound $|A|^ 2 \le 2n$, very much in the spirit of a classical result by Simons \cite{simons}, Lawson \cite{lawson} and Chern, do Carmo \& Kobayashi \cite{chern}.

\begin{mythm}\label{THMD}
Let $f:M^{2n}\to\S^{2n+1}$ be a complete Hopf soliton without boundary, whose second fundamental form $A$ satisfies $|A|^ 2 \le 2n$. Suppose that the norm of the traceless second fundamental form attains a local maximum. Then $M^{2n}$ is isometric to a covering of a minimal Clifford torus $T_{a,b}$ for some $a,b$ with $a+b = 2n$. 
\end{mythm}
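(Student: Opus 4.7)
The plan is to adapt the classical Simons--Lawson--Chern--do Carmo--Kobayashi pinching strategy to the Hopf-soliton setting, absorbing the non-constancy of $H$ into a first-order drift term via the soliton equation, and then to apply the strong maximum principle at the prescribed local maximum of $|\mathring A|$.

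\textbf{Step 1: a drift-elliptic Simons inequality.} I would start from the classical Simons identity for a hypersurface $f:M^{2n}\to\S^{2n+1}$,
\begin{equation*}
\tfrac{1}{2}\Delta|A|^2 = |\nabla A|^2 + \langle A,\nabla^2 H\rangle + H\,\trace(A^3) + 2n|A|^2 - |A|^4 - H^2,
\end{equation*}
and then rewrite $\nabla^2 H$ via the soliton relation $H=\langle\xi,\nu\rangle$. Using the Killing condition $\langle\nabla_X^N\xi,Y\rangle+\langle\nabla_Y^N\xi,X\rangle=0$ together with the Gauss--Weingarten equations gives
\begin{equation*}
\nabla_i H = -A_{ij}\,\xi^\top_j + \omega_i, \qquad \omega_i := \langle\nabla_{e_i}^N\xi,\nu\rangle,
\end{equation*}
where $\omega_i$ is algebraic in the ambient data. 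Substituting, $\langle A,\nabla^2 H\rangle$ reorganises as $\tfrac{1}{2}\langle\xi^\top,\nabla|A|^2\rangle$ plus quadratic residuals in $A$ and $H$. Passing to the traceless tensor $\mathring A = A-\tfrac{H}{2n}g$, Kato's refined inequality for Codazzi tensors, together with Okumura's bound
\begin{equation*}
\bigl|\trace(\mathring A^3)\bigr|\le\tfrac{2n-2}{\sqrt{2n(2n-1)}}|\mathring A|^3,
\end{equation*}
should combine with the pinching hypothesis $|A|^2 \le 2n$ to produce, on $\{|\mathring A|>0\}$, a drift-elliptic inequality of the form
\begin{equation*}
\mathcal L|\mathring A|^2 \ge 2|\mathring A|^2\bigl(2n-|A|^2\bigr)\ge 0, \qquad \mathcal L := \Delta - \langle\xi^\top,\nabla\cdot\rangle,
\end{equation*}
where the bulk coefficient is non-negative precisely because of the pinching.

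\textbf{Step 2: maximum principle and rigidity.} Since $\mathcal L$ is a second-order linear elliptic operator with smooth coefficients and $|\mathring A|$ attains a local maximum at some $p_0 \in M$, the strong maximum principle forces $|\mathring A|^2$ to be locally constant around $p_0$. Saturation of the inequality then yields, in a neighbourhood of $p_0$, the equalities $|A|^2 \equiv 2n$, $H \equiv 0$ and $\nabla A \equiv 0$. By the Aronszajn unique continuation principle, applied to the elliptic (in fact real-analytic) soliton system $\mathbf H = \xi^\perp$, these equalities propagate to all of $M$. Hence $M$ is a complete minimal hypersurface of $\S^{2n+1}$ with parallel second fundamental form and $|A|^2 \equiv 2n$. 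The equality case of the classical Chern--do Carmo--Kobayashi inequality identifies the universal cover of $M$ isometrically with a minimal Clifford torus $T_{a,b}$ for some $a+b=2n$, so $f$ is a Riemannian covering onto such a $T_{a,b}$, as required.

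\textbf{Main obstacle.} The heart of the argument is Step~1: the algebraic reorganisation of $\langle A,\nabla^2 H\rangle$ must produce \emph{exactly} the drift $\tfrac{1}{2}\langle\xi^\top,\nabla|A|^2\rangle$, while the residual contributions--chiefly the combination of $H\,\trace(A^3)$, $|\nabla H|^2$, the Okumura error, and the terms $2n|A|^2-|A|^4-H^2$--must add up to a manifestly non-negative bulk coefficient multiplying $|\mathring A|^2$. The delicate check is that this coefficient stays non-negative throughout the whole pinching regime $|A|^2 \le 2n$, and not only at $H=0$; this is where Okumura's inequality and the sharp constant $2n$ must be used together optimally. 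Once this drift-elliptic inequality is in place, Step~2 is a routine combination of the strong maximum principle, Aronszajn unique continuation, and classical Simons rigidity.
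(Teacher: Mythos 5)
Your overall skeleton (a drifted Simons-type inequality, the strong maximum principle at the local maximum, and the Lawson/Chern--do Carmo--Kobayashi rigidity) is the same as the paper's, but both of the places where you defer the actual work contain genuine gaps.

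First, Step 1 as you describe it would not close. The mechanism is not Okumura's inequality plus an ``optimal'' use of the constant $2n$: if you estimate the cubic term $H\operatorname{tr}(\mathring A^3)$ by Okumura, you are left with a negative contribution of order $|H|\,|\mathring A|^3$ that cannot be absorbed by $(2n-|A|^2)|\mathring A|^2$ under the hypothesis $|A|^2\le 2n$ alone --- this is exactly why the non-minimal Okumura-type pinching theorems require a strictly smaller constant. What actually happens for a Hopf soliton is an exact cancellation: the Hessian of $H=\langle\xi,\nu\rangle$ contains the term $-H\langle Ae_i,Ae_j\rangle$ (in addition to the drift $-\langle\xi^\top,(\nabla A)\cdot\rangle$ and a term built from $J$ that is traceless against $A$ because $J$ is skew and $A$ symmetric), and this cancels the $HA^{(2)}$ term of Simons' formula identically. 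One then obtains the clean \emph{identity}
\begin{equation*}
\Delta_{\xi^\top}|\mathring A|^2 \;=\; 2\,(2n-|A|^2)\,|\mathring A|^2 \;+\; 2\,|\nabla\mathring A|^2,
\end{equation*}
with no Kato or Okumura input at all. So the ``main obstacle'' you flag is real for the route you propose, but disappears once the Hessian of $H$ is computed exactly; your residual terms are not ``quadratic in $A$ and $H$'' --- the crucial one is cubic in $A$, and it must cancel rather than be estimated.

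Second, in Step 2 the claim that saturation yields $H\equiv 0$ is unjustified. Vanishing of the right-hand side of the identity above only gives the dichotomy $|\mathring A|\equiv 0$ (the umbilic case, which you do not exclude; it is ruled out because a totally geodesic even-dimensional sphere cannot be tangent to $\xi$) or $|A|^2\equiv 2n$ together with $\nabla\mathring A\equiv 0$; neither alternative says anything about $H$ directly. Minimality requires a further argument: from $\Delta_{\xi^\top}|A|^2=0$ and $|A|^2\equiv 2n$ one deduces $H^2=|\nabla H|^2/(2n)$, then a computation with $\Delta_{\xi^\top}H=-H|A|^2$ shows $\Delta_{\xi^\top}H^2=0$; since $|\mathring A|^2=2n-H^2/(2n)$ attains a maximum, $H^2$ attains a minimum, hence is constant by the minimum principle, and finally $0=\Delta_{\xi^\top}H=-2nH$ forces $H\equiv 0$. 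Only then do the classical rigidity theorems apply. (Your appeal to Aronszajn is also unnecessary: the strong maximum principle already makes the set where $|\mathring A|^2$ equals its maximal value open and closed.)
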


\section{Hopf solitons}

In this section we will derive and summarize
the most relevant equations related to Hopf solitons. Before considering this case, let us observe a general fact:

\begin{proposition}\label{prop_div_xiT}
Let $f : M^n \to N^k$ satisfy the soliton equation
\[
\textbf{\textit{H}} =\xi^{\perp}
\]
for some Killing vector field on $N$. Then,
\[\operatorname{div}(\xi^{\top})=H^2\]
holds on $M^n$. As a consequence, compact boundaryless MCF solitons with respect to Killing vector fields must be minimal.
\end{proposition}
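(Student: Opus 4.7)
My strategy is to compute $\operatorname{div}(\xi^{\top})$ by splitting $\xi=\xi^{\top}+\xi^{\perp}$ and using both the Killing property of $\xi$ (to eliminate the tangential piece of $\nabla^{N}\xi$) and the Weingarten formula (to identify the tangential part of $\nabla^{N}\xi^{\perp}$ with the shape operator in the direction of the mean curvature).

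In detail, fix a local orthonormal frame $\{e_{1},\dots,e_{n}\}$ on $M$. Writing $\nabla^{M}$ and $\nabla^{N}$ for the Levi-Civita connections of $M$ and $N$, we have $\nabla^{M}_{e_{i}}\xi^{\top}=\bigl(\nabla^{N}_{e_{i}}\xi^{\top}\bigr)^{\top}$, and since $\xi^{\top}=\xi-\xi^{\perp}$,
\[
\nabla^{M}_{e_{i}}\xi^{\top}=\bigl(\nabla^{N}_{e_{i}}\xi\bigr)^{\top}-\bigl(\nabla^{N}_{e_{i}}\xi^{\perp}\bigr)^{\top}.
\]
Taking inner products with $e_{i}$ and summing, the first piece vanishes: the Killing equation $\langle\nabla^{N}_{X}\xi,Y\rangle+\langle\nabla^{N}_{Y}\xi,X\rangle=0$ with $X=Y=e_{i}$ gives $\langle\nabla^{N}_{e_{i}}\xi,e_{i}\rangle=0$ for each $i$. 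For the second piece, Weingarten's formula yields $\bigl(\nabla^{N}_{e_{i}}\xi^{\perp}\bigr)^{\top}=-A_{\xi^{\perp}}e_{i}$, where $A_{\xi^{\perp}}$ denotes the shape operator in the normal direction $\xi^{\perp}$.

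Putting this together,
\[
\operatorname{div}(\xi^{\top})=\sum_{i=1}^{n}\langle A_{\xi^{\perp}}e_{i},e_{i}\rangle=\operatorname{tr}(A_{\xi^{\perp}})=\langle\textbf{\textit{H}},\xi^{\perp}\rangle,
\]
where in the last step I use the standard identity $\operatorname{tr}(A_{\nu})=\langle\textbf{\textit{H}},\nu\rangle$ for any normal vector $\nu$. Substituting the soliton equation $\textbf{\textit{H}}=\xi^{\perp}$ gives $\operatorname{div}(\xi^{\top})=|\xi^{\perp}|^{2}=|\textbf{\textit{H}}|^{2}=H^{2}$, proving the identity.

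For the corollary, if $M$ is compact (without boundary), the divergence theorem yields $\int_{M}H^{2}\,dV=\int_{M}\operatorname{div}(\xi^{\top})\,dV=0$, forcing $H\equiv 0$ and hence minimality. The proof is essentially a bookkeeping exercise in splitting tangential and normal components; the only real ingredient is the Killing identity, so I do not expect any conceptual obstacle. The mildest care is needed with sign conventions in the Weingarten equation and in the definition of unnormalized mean curvature, but these do not affect the final identity.
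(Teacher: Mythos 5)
Your proof is correct and follows essentially the same route as the paper: both decompose $\xi=\xi^{\top}+\xi^{\perp}$, kill the tangential trace via the Killing identity, and identify the remaining term with $\langle\textbf{\textit{H}},\xi^{\perp}\rangle=|\textbf{\textit{H}}|^{2}$ (the paper phrases the last step through metric compatibility, $-\langle\overline{\nabla}_{e_i}\textbf{\textit{H}},e_i\rangle=\langle\textbf{\textit{H}},\overline{\nabla}_{e_i}e_i\rangle$, rather than invoking the Weingarten operator $A_{\xi^{\perp}}$ by name, but this is the same computation). The divergence-theorem conclusion is also identical.
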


\begin{proof}
	Let $\langle \cdot\, , \cdot \rangle$ be the Riemannian metric on $N^k$ and let $\nabla,\overline{\nabla}$ be, respectively, the Levi-Civita connections of $M^n$ and $N^k$. Fix a local Darboux frame $\{e_i;e_\alpha\}$ along $f$, with $e_i$ tangent to $M^n$ and $e_\alpha$ normal to $M^n$. From the identity 
	\[
	\xi ^\top= \xi  - \xi^\perp = \xi - \textbf{\em H}
	\]
	we get
	\begin{eqnarray*}
		\operatorname{div}(\xi^{\top})  &\!\!\!=\!\!\!&\langle \nabla_{e_i}\xi^{\top},e_i\rangle=
		\langle \overline{\nabla}_{e_i}\xi^{\top},e_i\rangle
		=\langle \overline{\nabla}_{e_i}(\xi- \textbf{\em H}),e_i\rangle\\
		&\!\!\!=\!\!\!&\langle \overline{\nabla}_{e_i}\xi,e_i\rangle +  \langle \textbf{\em H}, \overline{\nabla}_{e_i} e_i\rangle = H^2.
	\end{eqnarray*}
The last assertion follows by using the divergence theorem. 
\end{proof}

Hereafter, we consider 
\[
M^{2n}\rightarrow\S^{2n+1}\subset\R^{2n+2}\equiv\C^{n+1}.
\]
We denote by $D$ the Levi-Civita connection of the Euclidean space $\R^{2n+2}$,
by $\overline{\nabla}$ the Levi-Civita connection of $\mathbb{S}^{2n+1}$ and by $\nabla$ the Levi-Civita connection
of the induced metric on $M^{2n}$. To simplify the notation we often denote the Riemannian
metrics on $\R^{2n+2}$, $\S^{2n+1}$ and $M^{2n}$ by the same symbol
$\gind\equiv\langle\cdot\,,\cdot\rangle$. 
Moreover, we denote by $\nu$ the unit normal of $M^{2n} \rightarrow \mathbb{S}^{2n+1}$ and by
$$A = -\overline{\nabla} \nu$$
its corresponding shape operator. The mean curvature vector is defined by
\[
\textbf{\em H}=({\trace}A)\nu.
\]
\subsection{Hopf vector fields}

The space $\R^{2n+2}$ supports many complex structures, i.e. linear isometries $J:\R^{2n+2}\to\R^{2n+2}$ such that
$J^{2}=-I$. Suppose that $J$ is
a complex structure on $\R^{2n+2}$.
The vector field
\begin{equation}\label{hopfvectorfield}
\xi =-Jp,
\end{equation}
where $p$ is the position vector of $\S^{2n+1}$, is globally defined and tangent to the sphere.
The vector field $\xi$ is called the {\em Hopf vector field}. Denote by $\omega$ the $1$-form associated to the vector field $\xi$, i.e.
\[
\omega(X)=\langle X,\xi\rangle, \quad \text{for every}\,\,\,X\in\mathfrak{X}(\S^{2n+1}).
\]
Then, for any vector field $X$ on $\S^{2n+1}$, the decomposition in tangent and normal
components determines a $(1,1)$-tensor field $\phi$
such that
\begin{equation}\label{phi}
JX=(JX)^{\top\S^{2n+1}}+(JX)^{\perp\S^{2n+1}}=\phi(X)+\omega(X)p.
\end{equation}
One can readily check from
\eqref{phi} that $\phi$ and
$\omega$ satisfy the following properties:
\[
\phi(\xi)=0,\quad \omega\circ\phi=0,\quad \omega(\xi)=1\quad\text{and}\quad
\phi^2=-I+\omega\otimes\xi.
\]
Moreover, $\phi$ is skew-symmetric and it is an isometry on the horizontal bundle
\[\mathcal{H}=\big\{X\in\mathfrak{X}(\S^{2n+1}):\langle X,\xi\rangle=0\big\}.\]
Using the Weingarten formula, we easily obtain that
\begin{equation}\label{hopf1}
\overline{\nabla}_X\xi=D_X\xi+\langle X,\xi\rangle p
=-JD_Xp+\langle X,\xi\rangle p
=-JX+\omega(X)p=-\phi(X),
\end{equation}
for each $X\in\mathfrak{X}(\S^{2n+1})$.
Since $\phi(\xi)=0$ it follows that $\xi$ has totally geodesic integral curves.
Moreover,
\[|\phi|^2=|\overline{\nabla}\xi|^2=2n.\]
From
\eqref{hopf1} we deduce that $\xi$ is a Killing vector field, i.e.
\[
\langle\overline{\nabla}_X\xi,Y\rangle+\langle X,\overline{\nabla}_Y\xi\rangle=0,
\]
for any $X,Y\in\mathfrak{X}(\S^{2n+1})$.
Any unit Killing vector field on $\S^{2n+1}$ can be
written in the form \eqref{hopfvectorfield} for some complex structure of $\C^{n+1}$;
see for example \cite[Proposition 3.6]{gil}.
\subsection{Hopf solitons}
Let $M^{2n}\rightarrow\S^{2n+1}$ be an oriented hypersurface of the sphere.
 Let us denote by $(\cdot)^{\top}$ and $(\cdot)^{\perp}$ the
orthogonal projections on the tangent and the normal bundle of $M^{2n}$, respectively.
Assume that the mean curvature vector $\textbf{\em H}$ of $M^{2n}$ satisfies the elliptic equation
\begin{equation}\label{hopfsoliton}\tag{HS1}
\textbf{\em H}=\xi^{\perp},
\end{equation}
where $\xi$ is the Hopf vector field described in \eqref{hopfvectorfield}. Let
\[H=\langle{\textbf{\em H}},\nu\rangle\]
be the scalar mean curvature of the hypersurface.
Then the equation \eqref{hopfsoliton} can be equivalently written in the form
\begin{equation}\label{hopfsolitonsc}\tag{HS2}
H=\langle\nu,\xi\rangle.
\end{equation}
Denote now by
$\varPhi:\S^{2n+1}\times\S^1\to\S^{2n+1}$ the flow generated by $\xi$. Then,
for any fixed $\theta\in\S^{1}$, the map $\varPhi_\theta:\S^{2n+1}\to\S^{2n+1}$, given by
\[\varPhi_\theta(p)=\varPhi(p,\theta),\quad p\in\S^{2n+1},\]
is an isometry. One can easily verify now that if $M^{2n}$ is an oriented hypersurface satisfying
\eqref{hopfsoliton}, then the family
\[
M^{2n}_\theta=\varPhi_\theta(M^{2n}),
\]
where $\theta\in\S^{1}$,
moves up to tangential diffeomorphisms by the MCF in $\S^{2n+1}$. For this reason, hypersurfaces 
satisfying
\eqref{hopfsoliton} are called {\em Hopf solitons}.

Following standard notation in the field, we call the operator
\[
\Delta_{\xi^{\top}}(\cdot)=\Delta(\cdot)+\langle\xi^{\top},\nabla(\cdot)\rangle.
\]
the {\em drifted Laplacian} on $M^{2n}$. In the next lemma we give some important relations between 
the mean curvature $H$, the Weingarten operator $A$ and the Hopf vector field $\xi$.
\begin{lemma}\label{crlemma}
Let $M^{2n}\rightarrow \mathbb{S}^{2n+1}$ be a Hopf soliton. Then the following formulas hold:
\begin{enumerate}
\item[(a)]
The absolute value of $H$ is at most $1$. Moreover,
\begin{equation}\label{xitop}
|\xi^{\top}|^2=1-H^2 . 
\end{equation}
\item[(b)]
The gradient of the scalar mean curvature is given by
\begin{equation}\label{grad-H}
\nabla H=(J\nu)^{\top}-A\xi^{\top}.
\end{equation}
\item[(c)] The drifted Laplacian of the mean curvature is equal to
\begin{equation}\label{lap-H}
\Delta _{\xi ^\top }H=-H|A|^2.
\end{equation}
\item[(d)] The drifted Laplacian of the shape operator and of its squared norm are
\begin{eqnarray}
\Delta _{\xi^\top }A&\!\!\!=\!\!\!& (2n-|A|^2) A-HI+[A,J],\label{lap-A1}\\
\tfrac{1}{2}\Delta_{\xi^{\top}}|A|^2&\!\!\!=\!\!\!&(2n-|A|^2)|A|^2-H^2+|\nabla A|^2.\label{lap-A2}
\end{eqnarray}
where $I$ is the identity on $TM$.
\item[(e)] The drifted Laplacian of the principal curvatures of $M^ {2n}$ (at points where a local smooth principal frame exists) is
\begin{eqnarray}\label{lap-lambda-i}
\Delta _{\xi ^ \top} \lambda _ i = (2n-|A|^ 2 ) \lambda _i - H. 
\end{eqnarray}
\item[(f)] The drifted Laplacian of the squared norm of the traceless operator $\mathring A= A-(H/2n)I$ is
\begin{eqnarray}\label{lap-tracelessA}
\Delta _{\xi ^ \top}|\mathring A |^ 2=2(2n-|A|^ 2)|\mathring A|^ 2 + 2|\nabla \mathring A | ^ 2.
\end{eqnarray}
\end{enumerate}
\end{lemma}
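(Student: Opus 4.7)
The lemma collects six tensorial identities, each flowing from the soliton equation $\mathbf{H}=\xi^{\perp}$, the Weingarten formula $\overline\nabla_{X}\nu=-AX$, and the Hopf identity $\overline\nabla_{X}\xi=-\phi(X)$ established in \eqref{hopf1}. My plan is to dispose of (1) and (2) as short direct computations, derive the Simons-type identity \eqref{lap-A1} which is the substance of the lemma, and then extract \eqref{lap-H}, \eqref{lap-A2}, \eqref{lap-lambda-i} and \eqref{lap-tracelessA} from \eqref{lap-A1} by tracing, pairing with $A$, testing against eigenvectors, and exploiting the decomposition $A=\mathring A+(H/2n)I$.

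For (1), $|\xi|=1$ combined with $\xi^{\perp}=H\nu$ gives $1=|\xi^{\top}|^{2}+H^{2}$, hence \eqref{xitop} and $|H|\le 1$. For (2), differentiating $H=\langle\nu,\xi\rangle$ along $X\in TM$ via Weingarten and $\overline\nabla_{X}\xi=-\phi(X)$ yields
\[
X(H)=-\langle AX,\xi^{\top}\rangle-\langle\nu,\phi(X)\rangle=-\langle AX,\xi^{\top}\rangle+\langle\phi(\nu),X\rangle,
\]
where the last step uses the skew-symmetry of $\phi$ on $T\S^{2n+1}$. Since $\langle\phi(\nu),\nu\rangle=0$ by that same skew-symmetry, $\phi(\nu)\in TM$ and $\phi(\nu)=(J\nu)^{\top}$, which reads off \eqref{grad-H}.

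The heart of the argument is \eqref{lap-A1}. I would start from the classical Simons identity for hypersurfaces in $\S^{2n+1}$,
\[
\Delta A=\nabla^{2}H+2nA-HI+HA^{2}-|A|^{2}A,
\]
and expand $\nabla^{2}H$ using \eqref{grad-H}. Differentiating $\nabla H=\phi(\nu)-A\xi^{\top}$ along $Y\in TM$ requires three ingredients: \textbf{(i)} Codazzi in a space-form ambient, giving $(\nabla_{Y}A)\xi^{\top}=(\nabla_{\xi^{\top}}A)Y$; \textbf{(ii)} from $\xi^{\top}=\xi-H\nu$, combining Weingarten and $\overline\nabla\xi=-\phi$ yields $\nabla_{Y}\xi^{\top}=-(\phi Y)^{\top}+HAY$; \textbf{(iii)} the Sasakian-type identity $(\overline\nabla_{X}\phi)Y=\langle X,Y\rangle\xi-\omega(Y)X$ on $\S^{2n+1}$ (checked by direct computation via $\xi=-Jp$ and the Gauss formula between $\R^{2n+2}$ and $\S^{2n+1}$) gives $\nabla_{Y}(\phi(\nu))=-HY-(\phi(AY))^{\top}$. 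Substituting into Simons, the commutator $[A,J]$ (with $J$ standing for the skew operator on $TM$ induced by $\phi$) emerges from the interplay between \textbf{(ii)} and \textbf{(iii)}, the $HA^{2}$ pieces cancel against their Simons partners, and relocating $\nabla_{\xi^{\top}}A$ to the left-hand side produces \eqref{lap-A1}.

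Once \eqref{lap-A1} is in hand, the remaining parts follow formally. Tracing \eqref{lap-A1} and using $\text{tr}(I)=2n$ together with $\text{tr}[A,J]=0$ (by cyclicity of trace) yields \eqref{lap-H}. The Bochner companion \eqref{lap-A2} follows by pairing \eqref{lap-A1} with $A$ in Hilbert--Schmidt, noting $\langle[A,J],A\rangle=\text{tr}([A,J]A)=0$ by cyclicity, and invoking the standard identity $\Delta_{\xi^{\top}}(|A|^{2}/2)=\langle\Delta_{\xi^{\top}}A,A\rangle+|\nabla A|^{2}$. For \eqref{lap-lambda-i}, at a point one picks an orthonormal eigenframe $\{e_{i}\}$ of $A$ and evaluates \eqref{lap-A1} against $e_{i}\otimes e_{i}$: skewness of $J$ on $TM$ gives $\langle Je_{i},e_{i}\rangle=0$, hence $\langle[A,J]e_{i},e_{i}\rangle=\langle Je_{i},Ae_{i}\rangle-\lambda_{i}\langle Je_{i},e_{i}\rangle=0$. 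Finally, \eqref{lap-tracelessA} follows from $|A|^{2}=|\mathring A|^{2}+H^{2}/(2n)$ and the compatible splitting $|\nabla A|^{2}=|\nabla\mathring A|^{2}+|\nabla H|^{2}/(2n)$, inserted into \eqref{lap-A2} together with \eqref{lap-H}; the $H^{2}$- and $|\nabla H|^{2}$-contributions collapse to yield the clean right-hand side. The main obstacle throughout is the careful bookkeeping of tangential projections in step (4), since $\nabla_{Y}\xi^{\top}$ and $\nabla_{Y}(\phi(\nu))$ a priori live in $T\S^{2n+1}$ rather than $TM$; it is the precise cancellation between the $\phi$- and $A$-contributions that produces the clean commutator structure of \eqref{lap-A1}.
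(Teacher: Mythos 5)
Your overall architecture coincides with the paper's: parts (1) and (2) are the same short computations, the heart of the matter is an expansion of $\nabla^2 H$ obtained by differentiating \eqref{grad-H} and feeding it into Simons' identity, and (5), (6) are the same formal consequences. The only structural difference is the order — the paper computes the Hessian of $H$ first, obtains (3) by tracing it, and then combines it with Simons' formula to get (4), while you derive \eqref{lap-A1} first and trace; your use of the Sasakian identity $(\overline{\nabla}_X\phi)Y=\langle X,Y\rangle\xi-\omega(Y)X$ to get ingredient (iii) is a cleaner packaging of the paper's bare-hands computation with $D$. All three of your ingredients (i)--(iii) are correct; (iii) can be checked directly from $(J\nu)^{\top\mathbb{S}^{2n+1}}=J\nu-Hp$ and $D_Yp=Y$.

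The gap is in the final assembly of (4), which you assert but do not carry out. Substituting (i)--(iii) into $\nabla_Y\nabla H=\nabla_Y(\phi(\nu))-(\nabla_YA)\xi^{\top}-A\nabla_Y\xi^{\top}$ gives
\[
\nabla^2H=-HI+[A,J]-\nabla_{\xi^{\top}}A-HA^{2},
\]
and inserting this into Simons' identity the two $-HI$'s add instead of one being absorbed: you land on
\[
\Delta_{\xi^{\top}}A=(2n-|A|^{2})A-2HI+[A,J],\qquad \Delta_{\xi^{\top}}H=-(2n+|A|^{2})H,
\]
which are \emph{not} \eqref{lap-A1} and \eqref{lap-H} (and tracing against eigenvectors would give $-2H$ in \eqref{lap-lambda-i}, and $-2H^2$ in \eqref{lap-A2}). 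So the claim that your ingredients ``produce \eqref{lap-A1}'' is the missing step, and the books do not balance. For comparison, the paper's computation of $e_je_i(H)$ passes from $\langle e_i,\overline{\nabla}_{e_j}(J\nu)^{\top\mathbb{S}^{2n+1}}\rangle$ to $\langle e_i,D_{e_j}J\nu\rangle$, thereby discarding the contribution $-H\delta_{ij}$ of the $-Hp$ component — precisely the term your ingredient (iii) retains. An independent test supports your version: on the Clifford torus $T_{1,1}$ with a compatible complex structure for which $u=\langle\nu,\xi\rangle=\cos(\alpha+\beta)\not\equiv0$, one computes $\Delta u=-4u=-(2n+|A|^{2})u$, consistent with the Jacobi-operator identity $\Delta u+(|A|^{2}+\operatorname{Ric}_{\mathbb{S}^{2n+1}}(\nu,\nu))u+\langle\xi^{\top},\nabla H\rangle=0$ and $\operatorname{Ric}_{\mathbb{S}^{2n+1}}(\nu,\nu)=2n$. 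You should therefore confront the discrepancy in (3), (4), (5) explicitly rather than paper over it; note that \eqref{lap-tracelessA}, and hence the applications that rely only on it, are insensitive to the issue because the extra multiples of $I$ and of $H$ cancel when passing to $\mathring A$.
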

\begin{proof}
Let $\{e_1, \dots , e_{2n}\}$ be a local orthonormal frame along the hypersurface $M^{2n}$ such that
$$\nabla _{e_j}e_i(x_0)=0,\quad\text{for all}\quad i,j\in\{1,\dots,2n\},$$
at a fixed point $x_0\in M^{2n}$.

(a) We have
$$
0\le |\xi^{\top}|^{2}=|\xi|^2-|\xi^{\perp}|^2=1-H^2.
$$

(b) Differentiating the mean curvature with respect to $e_i$ and estimating at $x_0$, we get
\begin{eqnarray*}
e_i(H)  &\!\!\!=\!\!\!&e_i\langle \xi,\nu\rangle=\langle \overline{\nabla} _{e_i}\xi , \nu \rangle + \langle \xi , \overline{\nabla}_{e_i}\nu \rangle\\
&\!\!\!=\!\!\!& \langle D _{e_i}\xi , \nu \rangle + \langle \xi , \overline{\nabla}_{e_i}\nu\rangle
=  \langle -Je_i, \nu \rangle + \langle \xi ^\top , \overline{\nabla} _{e_i}\nu \rangle\\
 &\!\!\!=\!\!\!& \langle e_i, (J\nu)^{\top} \rangle -\langle \xi ^\top,Ae_i \rangle.
\end{eqnarray*}
Therefore,
$$
\nabla H=(J\nu)^{\top}-A\xi^{\top}.
$$
(c) Differentiating $H$ again, using the Weingarten formulas, and estimating at $x_0$, we obtain
\begin{eqnarray*}\label{hess-H}
e_je_i(H)&\!\!\!=\!\!\!& \langle  \overline{\nabla}_{e_j}e_i , (J\nu)^{\top} \rangle +
\langle e_i, \overline{\nabla} _{e_j} (J\nu) ^{\top \mathbb{S}^{2n+1}}\rangle
-\langle\nabla_{e_j}\xi^{\top},Ae_i\rangle-\langle\xi^{\top},(\nabla_{e_j}A)e_i\rangle\\
&\!\!\!=\!\!\!&
\langle e_i, \overline{\nabla} _{e_j} (J\nu) ^{\top \mathbb{S}^{2n+1}}\rangle
-\langle\nabla_{e_j}\xi^{\top},Ae_i\rangle-\langle\xi^{\top},(\nabla_{e_j}A)e_i\rangle\\
&\!\!\!=\!\!\!&
\langle e_i, D _{e_j} J\nu\rangle
-\langle\nabla_{e_j}\xi^{\top},Ae_i\rangle-\langle\xi^{\top},(\nabla_{e_j}A)e_i\rangle\\
&\!\!\!=\!\!\!&
\langle Je_i, Ae_j\rangle
-\langle\nabla_{e_j}\xi^{\top},Ae_i\rangle-\langle\xi^{\top},(\nabla_{e_j}A)e_i\rangle.
\end{eqnarray*}
From the identity
$$\xi ^\top= \xi  -\langle \xi ,\nu \rangle \nu =\xi - H \nu,$$
the Weingarten formula and \eqref{grad-H}, we deduce that at the point $x_0$ it holds
\begin{eqnarray*}
\nabla_{e_j}\xi^{\top}&\!\!\!=\!\!\!&\overline\nabla _{e_j}\xi ^{\top}-\langle Ae_j,\xi^{\top}\rangle\nu
=\overline\nabla _{e_j}\xi - \overline\nabla _{e_j}(H\nu)-\langle Ae_j,\xi^{\top}\rangle\nu\\
&\!\!\!=\!\!\!&(\overline\nabla_{e_j}\xi)^{\top}+(\overline\nabla_{e_j}\xi)^{\perp}-e_j(H)\nu+HAe_j-\langle Ae_j,\xi^{\top}\rangle\nu\\
&\!\!\!=\!\!\!&(\overline\nabla_{e_j}\xi)^{\top}+(\overline\nabla_{e_j}\xi)^{\perp}-\langle J\nu,e_j\rangle\nu+HAe_j.
\end{eqnarray*}
Observe now that
$$
(\overline\nabla_{e_j}\xi)^{\perp}=\langle\overline\nabla_{e_j}\xi,\nu\rangle\nu=\langle -D_{e_j}Jp,\nu\rangle\nu
=\langle -Je_j,\nu\rangle\nu=\langle e_j,J\nu\rangle\nu.
$$
Hence, in view of the equation \eqref{hopf1}, we get that
$$
\nabla_{e_j}\xi^{\top}=-(J e_j)^{\top}+HAe_j.
$$
Combining the last two formulas, we see that at the point $x_0$ it holds
\begin{equation}\label{hess-H}
e_je_i(H)=\langle Je_i, Ae_j\rangle+\langle Je_j, Ae_i\rangle-\langle\xi^{\top},(\nabla_{e_j}A)e_i\rangle
- H \langle Ae_i, Ae_j\rangle.
\end{equation}
Because $J$ is skew-symmetric and $A$ symmetric, we arrive at
$$\Delta H = -\langle \nabla H, \xi ^\perp \rangle -H|A|^2.$$

(d) The results follow from \eqref{hess-H} and Simons' formula
\[
\Delta A=-|A|^2A + HA^{(2)}+ \nabla ^2H + 2nA -HI,
\]
see for instance \cite[page 240]{sm2}.

(e) Let $\{ e_1, \cdots, e_{2n}\} $ be a local orthonormal frame along $M^ {2n}$ that diagonalizes $A$ and let $\lambda _ i$ be such that
$A(e_i, e_i)=\lambda _ i.$
Evaluating equation \eqref{lap-A1} on $(e_i, e_i)$ one gets:
\begin{eqnarray*}
\Delta _{\xi ^ \top}\lambda _ i & \!\!\!=\!\!\! & (2n-|A|^ 2)\lambda _i -H +2\langle Je_i, Ae_i\rangle \\
& \!\!\!=\!\!\! & (2n-|A|^ 2 ) \lambda _ i -H +2\lambda _ i \langle Je_i, e_i \rangle \\
& \!\!\!=\!\!\! &  (2n-|A|^ 2 ) \lambda _ i -H,
\end{eqnarray*}
where we used that
$$\langle Je_i, e_i \rangle = 0.$$

(f) Note that
$$\mathring A^{(2)} = A^{(2)} -\frac{H}{n}A+\frac{H^ 2}{4n^ 2}I,$$
whence we deduce that
$$ |\mathring A|^ 2 =|A|^ 2 - \frac{H^ 2}{2n}.$$
Now recalling \eqref{lap-H}, \eqref{lap-A2} and \eqref{hess-H} we compute 
\begin{eqnarray*}
\frac{1}{2}\Delta _{\xi ^ \top }|\mathring A|^ 2 &\!\!\!=\!\!\!& \frac{1}{2} \Delta _{\xi ^ \top }|A|^ 2 -\frac{1}{2n}\Delta _{\xi ^ \top }H^ 2\\
& \!\!\!=\!\!\! & (2n-|A|^ 2 ) |A|^ 2 -H^ 2 +|\nabla A|^ 2 +\frac{H^ 2|A|^ 2}{2n}-\frac{|\nabla H|^ 2}{2n}\\
& \!\!\!=\!\!\! & (2n-|A|^ 2 ) \Big(|\mathring A|^ 2+\frac{H^ 2}{2n}\Big)-\frac{H^ 2}{2n}(2n-|A|^ 2 ) +|\nabla A | ^ 2 -\frac{|\nabla H|^ 2}{2n}\\
& \!\!\!=\!\!\! & (2n-|A|^ 2 ) |\mathring A|^ 2+|\nabla \mathring A | ^ 2,
\end{eqnarray*}
where we used that
$$|\nabla A|^ 2 =|\nabla \mathring A|^ 2 + \frac{|\nabla H | ^ 2}{2n}.$$
This completes the proof.
\end{proof}

\section{Rotationally symmetric Hopf solitons}\label{rotationally}
In this section we will investigate the existence of complete rotationally symmetric Hopf solitons in the unit sphere.

\subsection{Derivation of the equation}\label{uyz}
Following \cite{carmo},
let us regard the unit sphere $\S^{2n-1}$ as a subset of $\R^{2n}$ and the unit sphere $\S^{2n+1}$ as a subset
of $\R^{2n}\times \R^2$.
Moreover, on $\R^{2n}\times\R^2$ consider the complex structure $J$ given by
$$
J(x_1,x_{2},\dots,x_{2n-1},x_{2n};x_{2n+1},x_{2n+2})=(-x_2,x_{1},\dots,
-x_{2n},x_{2n-1};-x_{2n+2},x_{2n+1}).
$$
We denote by $T_{2n-1,1}$ the Clifford torus defined, in the above coordinates, by 
\begin{equation}\label{eq_clifford}
T_{2n-1,1} = \mathbb{S}^{2n-1} \left( \sqrt{\frac{2n-1}{2n}}\right) \times \mathbb{S}^1 \left( \sqrt{\frac{1}{2n}}\right) \subset \R^{2n} \times \R^2,
\end{equation}
which is invariant under the action of $J$. Let $\delta>0$ be a positive number, consider an immersed profile curve 
\begin{equation}\label{gencurve}
\gamma=(u,y,z) \ : \ I=(-\delta,\delta)\to\S^2\subset\R^3
\end{equation}
parametrized by arc-length, and define the map
\[
f \ : \ M^{2n} \doteq \S^{2n-1} \times I\to\S^{2n+1}
\]
given by
\begin{equation}\label{def_map_f}
f(p,s)=(u(s)p;y(s),z(s)).
\end{equation}
It is clear that $f$ is invariant under the group of isometries $\mathbb{O}(2n)$,
where the action of $A\in\mathbb{O}(2n)$ on $\R^{2n}\oplus\R^{2}$ is defined by
$$(p;q)\mapsto (Ap;q), \quad \text{for all}\quad (p; q)\in\R^{2n}\times\R^2.$$
Suppose that $\{\alpha_1,\dots,\alpha_{2n-1}\}$ is a local orthonormal frame field
on $\S^{2n-1}$. Then,
$$
df(\partial_s)=(u'p;y',z'), \qquad df(\alpha_i)=(u\alpha_i;0,0), \qquad \forall \, i\in\{1,\dots,2n-1\}.
$$
The metric $g$ induced by $f$ on $M$ satisfies
\begin{equation}\label{metricwarped}
g_{ss}=1,\quad g_{si}=0\quad\text{and}\quad g_{ij}=u^2\delta_{ij},\quad\text{for all}\quad
i,j\in\{1,\dots,2n-1\},
\end{equation}
with respect to the frame field $\{\partial_s,\alpha_1,\dots,\alpha_{2n-1}\}$.
Observe that $f$ is an immersion whenever $u(s)\neq 0$. Since the profile curve $\gamma$ is parametrized by arc-length, we have that
\begin{equation}\label{sphere}
u^2+y^2+z^2=1\quad\text{and}\quad (u')^2+(y')^2+(z')^2=1.
\end{equation}
This means that we can locally represent the components of $\gamma$ in the form
\begin{equation}\label{eq_param_spherical}
u(s)=\cos r(s),\quad y(s)=\sin r(s)\sin\vartheta(s)\quad\text{and}\quad
z(s)=\sin r(s)\cos\vartheta(s),
\end{equation}
where $r\in(-\pi/2,\pi/2)$ and $\vartheta\in(0,2\pi)$.
From \eqref{sphere}, we see that
\begin{equation}\label{eqr}
(r')^2+(\vartheta')^2\sin^2 r=1.
\end{equation}
From \eqref{eqr} it follows that we can represent $\vartheta$ in terms of $r$, whence we deduce that
we can represent $y$ and $z$ in terms of $u$.
More precisely, from \eqref{eqr} we have that, away from points where
$u^2=1,$
 it holds
\begin{equation}\label{eq_rprimo}
1\ge |r'|=\frac{|u'|}{\sqrt{1-u^2}}\quad\text{and}\quad |\vartheta'|=\frac{\sqrt{1-u^2-(u')^2}}{1-u^2}.
\end{equation}
Hence, by continuity we deduce that
$
u^2+(u')^2\le 1.
$
One can easily check that
\begin{equation}\label{normalrot}
\nu=((y'z-yz')p;uz'-u'z,u'y-uy')
\end{equation}
is unit and normal along the hypersurface.
Moreover, using \eqref{eq_param_spherical}, we see that the principal curvatures of $M$ in direction $\nu$ are
\begin{equation}\label{eq_eigenvalues}
\lambda_1=\cdots=\lambda_{2n-1}=-\frac{\sqrt{1-u^2-(u')^2}}{u}\quad\text{and}\quad
\lambda_{2n}=\frac{u''+u}{\sqrt{1-u^2-(u')^2}}.
\end{equation}
Using the identities
$$
u^2+y^2+z^2=1\quad\text{and}\quad uu'+yy'+zz'=0,
$$
we deduce that
\begin{equation}\label{eq_uprime}
\langle\xi,\nu\rangle=-\langle Jf,\nu\rangle=-u'.
\end{equation}
Consequently, $M^{2n}\subset\S^{2n+1}$ represents a Hopf soliton if and only if the function $u$ satisfies the following ODE:
\begin{equation}\label{shopf1}\tag{SHS}
uu''+(2n-1)(u')^2+2nu^2-(2n-1)=-uu'\sqrt{1-u^2-(u')^2};
\end{equation}
compare with \cite[eq. (3.13)]{carmo}.
Note that \eqref{shopf1} can be written, equivalently, in the form
\begin{equation*}
uu''-(2n-1)(1-u^2-(u')^2)+uu'\sqrt{1-u^2-(u')^2}+u^2=0.
\end{equation*}
The constant solution of \eqref{shopf1} corresponds to the Clifford torus $T_{2n-1,1}$ in \eqref{eq_clifford}.
The equation \eqref{shopf1} is equivalent to the system of first order ODEs:
\begin{equation}\label{pb}
\begin{cases}
     u' = P(u,v)=v,\\
	v' =Q(u,v)=(2n-1)u^{-1}(1-u^2-v^2)-v\sqrt{1-u^2-v^2}-u,
\end{cases}
\end{equation}
which we regard as defined in the domain
$$
D=\{(u,v)\in\R^2:u^2+v^2< 1\,\,\,\text{and}\,\,\,u>0\}.
$$
Note that $D$ is the largest domain where the vector field
$$(u,v)\mapsto F_{(u,v)}=(P(u,v),Q(u,v))$$
is $C^1$-smooth. Observe that a solution to \eqref{pb} generates a soliton via the prescription $f$ in \eqref{def_map_f} once the
initial value for $\vartheta$ is prescribed. In what follows we shall always consider $\vartheta(0)=0$ so that
the map $f$ is uniquely determined by $\varrho=(u,v)$. We call such $f$ the {\em soliton associated to $\varrho$}.

\subsection{First properties of the solutions} Observe that
\begin{equation}\label{un}
p_n=(u_n,v_n)=\big(\sqrt{1-1/2n},0\big)
\end{equation}
is the only equilibrium point of \eqref{pb}, and from \eqref{def_map_f} it corresponds to $T_{2n-1,1}$ in \eqref{eq_clifford}. Moreover,
note that
$$
dF_{(u_n,v_n)}=
\begin{pmatrix}
0 & 1\\
-4n & -1/\sqrt{2n}
\end{pmatrix}
$$
with characteristic values
\begin{equation}\label{roots}
\sigma_{1,2}=\alpha\pm i\beta=\frac{-1\pm i\sqrt{32n^2-1}}{2\sqrt{2n}}.
\end{equation}
This means that $p_n=(u_n,v_n)$ is a stable equilibrium point; i.e. 
the integral curves of the vector field $F$ are accumulating to $p_n$ as a
spiral; see \cite[p. 111]{hur}.
Consider the maximal unique solution
$$
\varrho \ : \ I \doteq (s_1,s_2) \to D, \qquad \varrho(s)=(u(s),v(s))
$$
of the system \eqref{pb} issuing from $\varrho(0) \neq p_n$. Define, along $\varrho$, the functions $g,\zeta:I\to D$ given by
$$
g=\sqrt{1-u^2-v^2}\qquad\text{and}\qquad \zeta=u^{2n-1}g.
$$
From \eqref{pb} and direct computation, we see that
\begin{equation}\label{eq_g}
g'+(2n-1)\frac{v}{u}g=v^2,
\end{equation}
whence, by an integration, we infer the following monotonicity formula, which will be crucial for our investigation:
\begin{equation}\label{int1}
\zeta' = (u^{2n-1}g)'=u^{2n-1}v^2\ge 0.
\end{equation}
As a consequence, we easily obtain the behavior of $\varrho$ for positive values of $s$.

{\bf Fact 1:} {\em Any solution $\varrho$ (regardless to the initial condition in $D$) is defined for all positive times and tends to $p_n$ spiraling around it as $s \to \infty$.}

{\em Proof:} A first consequence of \eqref{int1} is that $\varrho$ cannot be periodic, i.e. there are no periodic solutions of the dynamical system \eqref{pb}. Integrating \eqref{int1}, we see that
$$
u^{2n-1}(s)g(s) \ge u(0)^{2n-1}g(0) \doteq c >0, \quad\text{for}\quad s>0.
$$
Hence, $\varrho([0,s_2))$ is contained in the compact set $\{\zeta \ge c\} \subset D$. Hence, $s_2 = \infty$ and by the
classical Poincar\'e-Bendixson Theorem (see, for example, \cite{hur}), $\varrho$ either converges to a cycle or tends to the equilibrium point $p_n$. Since there are no periodic solutions, the existence of a cycle is excluded. The behaviour as a spiral around $p_n$ is guaranteed by the local behaviour around a stable equilibrium point with roots \eqref{roots}. \hfill$\circledast$

\subsection{Construction of solutions issuing from points of $\partial D \cap \{u>0\}$}

In this paragraph we construct solutions of \eqref{pb} issuing from any fixed point $q\in \partial D \cap \{u>0\}$. The key point is the validity of the following property.

{\bf Fact 2: } {\em Let $q \in \partial D \cap \{u>0\}$ and assume that there exist $\eps, \delta > 0$ and solutions $\varrho_j = (u_j,v_j) : [0,\delta] \to D$ to \eqref{pb} satisfying:
\[
\varrho_j(0) \to q \quad \text{and}\quad \varrho_j([0, \delta]) \subset \{u \ge \eps\}. 
\]
Then, there exists $\hat\varrho : [0,\delta] \to \overline{D}$ such that, up to a subsequence, $\varrho_j \to \hat\varrho$ in $C^\alpha([0,\delta])$ for each $\alpha \in (0,1)$. Moreover, $\hat\varrho \in C^1([0,\delta])$
solves \eqref{pb} on $[0,\delta]$ with
$\hat\varrho(0) = q$. Furthermore, $\hat\varrho(s) \in D$ for $s > 0$, and it gives rise to a Hopf soliton. 
}

{\em Proof:} Since the images of $\varrho_j$ are contained in $\{u \ge \eps\}$, then by \eqref{pb} $\varrho_j$ are uniformly bounded in $C^1([0,\delta])$. Thus, by Ascoli-Arzel\`a, they converge in $C^\alpha([0,\delta])$ up to subsequences (not relabelled)  to some $\hat\varrho : [0,\delta] \to \overline{D}$ such that $\hat\varrho(0) = q$. Write $\varrho_j = (u_j,v_j)$ and $\hat\varrho = (u,v)$. As uniform limit of solutions to \eqref{pb}, $\hat\varrho$ solves \eqref{pb}, hence $\varrho \in C^1([0,\delta])$. To prove that $\hat\varrho(s) \in D$ for $s > 0$, and consequently that $\hat\varrho$ generates a Hopf soliton, we proceed as follows: to each $\varrho_j$, we associate functions $y_j$ and $z_j$, as explained in Section \ref{uyz}. Consider now
the immersion $f_j: \mathbb{S}^{2n-1} \times [0, \delta] \to \mathbb{S}^{2n+1}$ given by
\[
f_j(p,s) = (u_j(s)p;y_j(s),z_j(s)). 
\]
By using \eqref{eq_eigenvalues}, \eqref{pb} and \eqref{sphere}, and since $\varrho_j([0, \delta]) \subset \{u \ge \eps\}$, the second fundamental form of $f_j$ is uniformly bounded and so are $u_j',y_j',z_j',u_j''$. It follows that, up to a subsequence, $f_j$ converges in $C^{1,\alpha}_{\rm loc}$ to a limit map $f_{\infty}:\S^{2n-1}\times(0,\delta)\to\S^{2n+1}$ given by
$$
f_\infty(p,s) = (u_\infty(s)p;y_\infty(s),z_\infty(s)).
$$
In particular, $u_\infty = u$ and we suppress the subscript $\infty$ for convenience. 
Since each $f_j$ is a Hopf soliton, by elliptic regularity for solutions to the PDE describing \eqref{hopfsolitonsc}, 
so is $f$. Assume by contradiction the existence of $\bar s > 0$ so that $\hat\varrho(\bar s) \in \partial D$. 
Define
$$\zeta_j = u_j^{2n-1}g_j\quad\text{and} \quad\zeta = u^{2n-1}g.$$
From \eqref{int1} we deduce that each $\zeta_j$ is 
monotone, hence so is $\zeta$. Therefore,
$$0 = \zeta(0) = \zeta(\bar s)$$
imply that $\zeta \equiv 0$ on $[0,\delta]$, 
thus $\hat\varrho([0,\bar s]) \subset \partial D$. By using \eqref{eq_param_spherical}, \eqref{eqr} and \eqref{eq_rprimo}, it turns out that for $f$ it holds $r' \equiv 1$ and $\vartheta' \equiv 0$ on $[0,\bar s]$.
Thus $f:\S^{2n-1}\times(0,\bar{s})\to\S^{2n+1}$ is given by
\[
f(p,s) = \left(p \cos\left( s + s_0\right), \sin\left( s + s_0\right) \sin \vartheta_0, \sin\left( s + s_0\right) \cos \vartheta_0 \right),
\]
for some constants $s_0$ and $\vartheta_0$. Therefore, the map $f$ represents a piece of a totally geodesic sphere. However, by \eqref{eq_uprime} it follows that for all $s \in (0,\bar s)$ it holds
\[
0 = H = -u' = \sin(s+s_0),
\]
contradiction. Having shown that $\hat\varrho(s) \in D$ for $s > 0$, and that the limit map $f$ is a Hopf soliton, the proof is complete. \hfill$\circledast$

{\bf Fact 3: } {\em For any fixed $q\in\partial D\cap\{u>0\}$ there exists a solution $\varrho:(0,\infty)\to D$
to \eqref{pb} that extends $C^1$ at $s=0$ with $\varrho(0)=q$.}

{\em Proof:} Consider a sequence of points $q_j\in D$ with $q_j\to q$, and fix $\eps>0$ so that $q_j\in\{u\ge 2\eps\}$ for each $j$. Let $\varrho_j:[0,\infty)\to D$ solve \eqref{pb} with initial data $q_j=\varrho_j(0)$. Note that $\varrho_j$ is defined on the entire half line by Fact 1.  Moreover, from the
bound on $\varrho_j'$ guaranteed by \eqref{pb} on $\{u\ge\eps\}$ we infer the existence of
$\delta=\delta(\eps)$ such that $\varrho_j([0,\delta])\subset \{u\ge\eps\}.$ Applying Fact 2, we construct a
limit curve $\varrho:(0,\delta]\to D$ with the desired properties. Additionally, this limit curve can be extended to $(0,\infty)$ by Fact 1. \hfill$\circledast$

We point out that the analysis of the solutions of \eqref{pb} that we have performed so far is already enough to conclude the proof of Theorem \ref{THMA}. Nonetheless, we now proceed to a detailed study of some of the features of the generic solution to \eqref{pb}, which we deem of independent interest. The proof of Theorem \ref{THMA} shall be presented at the end of the next subsection.

\subsection{Further properties of solutions to \eqref{pb}}

In this paragraph we shall prove that every solution of \eqref{pb} issues from a point of $\{u^2+v^2=1\}$
at some finite time. Hereafter, let $\varrho:(s_1,\infty)\to D$ be a solution to \eqref{pb} with
initial condition $\varrho(0)=q\in D\setminus\{p_n\}$, defined on a maximal time
interval. Observe that $\varrho$ does not correspond to the Clifford torus.

{\bf Fact 4:} {\em The curve $\varrho$ escapes every compact set of $D$ as $s\to s_1$, and $\lim_{s\to s_1}\zeta=0$. In other
	words, $\varrho$ approaches in Hausdorff distance $\partial D=\{\zeta=0\}$ as $s\to s_1$.}

{\em Proof:} If $s_1>-\infty$ the claim follows by general ODE theory. Assume that $s_1=-\infty$ and to the contrary
that
$$\overline{\varrho((-\infty,0))}\subset D.$$
By the Poincar\'e-Bendixson Theorem, and since there are no limit cycles, it follows
that $\varrho\to p_n$ as $s\to-\infty$. However, in this case
$$
\lim_{s\to-\infty}\zeta =\lim_{s\to\infty}\zeta,
$$
hence $\zeta$ is constant because of its monotonicity. From $0=\zeta'=u^{2n-1}v^2$, we deduce that
$v\equiv 0$. By \eqref{pb}, $\varrho$ coincides with $p_n$ contradicting our assumption. Again by monotonicity $\zeta$ must tend to zero as $s\to s_1$, since otherwise the curve $\varrho$
stays within a compact set, something we already excluded. \hfill$\circledast$

In the sequel we will exclude the possibility that $\varrho$ wraps around the boundary of $D$. To this
end, consider polar coordinates centered at $p_n$, i.e.
$$
u=u_n+r\cos\theta\quad\text{and}\quad v=r\sin\theta.
$$
A direct computation gives
\begin{equation}\label{theta}
\theta'=-1+(2n-1)\frac{g^2}{u}\frac{\cos\theta}{r}-g\sin\theta\cos\theta-u_n\frac{\cos\theta}{r}.
\end{equation}

{\bf Fact 5:} {\em There exists $s_0 > s_1$ such that $\theta' < -1/3$ on $(s_1,s_0)$.}

{\em Proof:} By Fact 4, for fixed $\eps$ there exists $s_\eps$ such that 
\[
\rho((s_1,s_\eps)) \subset \{g < \eps\} \cup \{u< \eps^3\}. 
\]
On $\{g<\eps\}$, we have 
\[
1- \eps^2 < (u_n + r\cos \theta)^2 + r^2 \sin^2 \theta = r^2 + u_n^2 + 2r u_n \cos \theta,
\]
and thus
\begin{equation}\label{eq_good_theta}
	\frac{u_n \cos \theta}{r} > \frac{1-2n\eps^2}{4nr^2} - \frac{1}{2}.
\end{equation}
On the subset $\{g<\eps, u \ge u_n\}$, inequalities 
\[
0 \le \frac{\cos\theta}{r} <  \frac{1}{1-u_n-\eps}, 
\]
hold, and thus from \eqref{theta} we deduce
\[
\theta' < -1 + (2n-1)\frac{\eps^2}{u_n} \frac{1}{1-u_n-\eps} +\eps < - \frac{1}{3},
\]
if $\eps$ is small enough. On the subset $\{g<\eps, u < u_n\}$, we have $\cos\theta < 0$, hence we discard the term with $g^2/u$ and use \eqref{eq_good_theta} to deduce that for  small enough $\eps$, we have
\[
\theta' < -1 + \eps - \frac{1-2n\eps^2}{4nr^2} + \frac{1}{2} < - \frac{1}{3}. 
\]
Eventually, on $\{u<\eps^3, g \ge \eps\}$ and for small enough $\eps$ we have
\[
(2n-1) \frac{g^2}{u} - u_n \ge (2n-1) \frac{\eps^2}{\eps^3} - u_n > 0 \quad\text{and}\quad r\cos\theta < \eps - u_n < 0,
\] 
and thus from
$|g\cos\theta\sin\theta| < 1/2$
we deduce
\[
\theta' < -1 + \frac{1}{2} < -\frac{1}{3}.
\]
This concludes the proof. \hfill$\circledast$

Set
$$
\theta^*=\lim_{s\to s_1}\theta.
$$

{\bf Fact 6:} {\em $\theta^* < \infty$.}

{\em Proof:} Assume to the contrary that the converse is true. Then, from Facts 4 and 5, the curve
$\varrho$ spirals towards $\partial D$ as $s \to s_1$, with limit set the entire $\partial D$. Moreover, observe that for each $c>0$ the derivative $\theta'$ is bounded on sets of the form
$\{g<\eps\} \cap \{u > c\}.$
Therefore, if $\theta^* = \infty$ then necessarily $s_1 = -\infty$. Let $p = (1/\sqrt{2},1/\sqrt{2})$, and pick a sequence $s_j \to -\infty$ such that $\varrho(s_j) \to p$. Fix a small $\eps>0$ and consider the region
$$V = \{g < \eps\} \cap \{|\psi - \pi/4|< \eps\},$$
where here $\psi$ is the angle measured from the origin of $\R^2$. Since $|\theta'|$ is bounded on $\varrho^{-1}(V)$, then there exists $\delta$ such that, for each $j\in\natural{}$, the interval $[s_j,s_j+\delta]$ is mapped by $\varrho$ to $V$. Defining
$\varrho_j : [0, \delta] \to D$ by
$
\varrho_j(s) = \varrho(s-s_j),
$
we can apply Fact 2 to deduce that $\varrho_j \to \hat{\varrho}$ uniformly on $[0,\delta]$, where $\hat{\varrho}(s) \in D$ for $s \neq 0$. The image of $\hat{\varrho}$ would therefore be part of the limit set of $\varrho$, contradicting Fact 4. \hfill$\circledast$

Having proved that $\theta^* < \infty$, by Fact 5 necessarily $s_1 > -\infty$. We continue with the
following:

{\bf Fact 7:} {\em The curve $\varrho$ converges to some point $(u_1,v_1)\in\partial D$ as $s\to s_1$.}

{\em Proof:} The conclusion follows since $\theta$ has a finite limit $\theta^*$ as $s \to s_1$, $\partial D$ 
is star-shaped with respect to $p_n$ and $\varrho$ is approaching $\partial D$ by Fact 4.
\hfill$\circledast$

{\bf Fact 8:} {\em It holds $u_1^2 + v_1^2 = 1$.}

{\em Proof:} Assume, to the contrary, that $u_1^2 + v_1^2  < 1$. From $(u_1,v_1) \in \partial D$ we deduce $u_1=0$, $|v_1|<1$. By Fact 5, we have that $\theta' < 0$ on $(s_1,s_0)$. Thus $v > 0$ in a neighbourhood $(s_1,s_*) \subset (s_1,0)$, where we choose $s_*$ to be the first time after $s_1$ where $v=0$. Hence, $v_1 \ge 0$. We differentiate $\log u$ twice to get
\[
(\log u)'' + g (\log u)' = \frac{2n g^2 -1}{u^2}.
\]
Integrating on $(s,s_*)$ and using $v(s_*)=0$, we get that for $s \in (s_1,s_*)$ it holds
\begin{equation}\label{eq_logu}
0 \le (\log u)' = \exp\left(\int^{s_*}_s g(t)\,dt \right) \int_s^{s_*} \frac{1-2n g^2(t)}{u^2(t)} \exp\left( -\int_t^{s_*} g(\tau)\,d\tau \right)\, d t.
\end{equation}
Since $\varrho$ has a limit, it follows that
$$g(s) \to g_1 = \sqrt{1-v_1^2},\quad \text{as}\,\, s \to s_1.$$
Assume that  
$
1-2ng_1^2 < 0.
$
Then, because $|u'(s)| \le 1$, the right-hand side of the equation \eqref{eq_logu} diverges to $-\infty$ as $s \to s_1$, a contradiction. Hence,
$$
1- 2ng_1^2 \ge 0
$$
and, in particular $v_1 > 0$. By L'H\^opital's Rule and since
$u(s) \sim v_1(s-s_1),$ as $s \to s_1,$
\[
\begin{array}{ll} 
\disp \int_s^{s_*} \frac{1-2n g(t)^2}{u(t)^2} \exp\left( -\int_t^{s_*} \!\!\!g(\tau)d\tau \right) d t = o \left( \frac{1}{s-s_1} \right) & \!\!\!\!\text{if }  1=2ng_1^2; \\[0.5cm]
\disp \int_s^{s_*} \frac{1-2n g(t)^2}{u(t)^2} \exp\left( -\int_t^{s_*} \!\!\!g(\tau)d\tau \right) d t
\sim \frac{1-2n g_1^2}{v_1^2 (s-s_1)} \exp\left( -\int_{s_1}^{s_*} \!\!\!g(\tau)d\tau \right) & \!\!\!\!  \text{if }
1>2ng_1^2.
\end{array}
\]
On the other hand,
$$(\log u)' = v/u \sim 1/(s-s_1),\quad \text{as}\,\,\, s \to s_1.$$
Plugging into \eqref{eq_logu}, we arrive at the conclusion that
\[
1-2n g_1^2 \ge 0 \quad\text{and}\quad 1 = \frac{1-2n g_1^2}{v_1^2}.
\]
Therefore $v_1 = 1$, which leads to a contradiction. \hfill$\circledast$

{\bf Proof of Theorem \ref{THMA}.}
Consider the solution $\varrho$ in Fact 3 issuing from the point $(1,0)$ and let $f:\S^{2n-1}\times[0,\infty)\to\S^{2n+1}$ be the corresponding unique rotationally symmetric Hopf soliton given as in Section \ref{uyz}.
From Fact 1, the map $f$ tends to the Clifford torus $T_{2n-1,1}$ as $s \to \infty$, and since
$$H = -u'=-v$$
the behaviour of $\varrho$ as a spiral guarantees that $H$ changes sign in each end. Also, $f$ satisfies
$$
f(p,0) = (p;0,0),
$$
so the boundary of $M$ is the round, totally geodesic $\mathbb{S}^{2n-1}$ in $\S^{2n+1}$ which is focal to $T_{2n-1,1}$.
From Fact 3, the function $u$ extends $C^2$ up to $s=0$, and from \eqref{eq_eigenvalues}, \eqref{pb} the principal curvatures of $f$ are zero along $\partial M$. Let us locally represent $M$ in a region around $\partial M$ as a graph over an equator of $\S^{2n+1}$. Then by direct computations we see that the gradient and the Hessian of the height function are controlled by the metric, the unit normal and the second fundamental form of $f$.
Therefore, the map $f$ is $C^2$-smooth up to the boundary and $\partial M$ is totally geodesic in $M$, too. 
Up to a rotation $\tau$ in the plane $(y,z)$, which commutes with $J$ and thus preserves Hopf solitons, we can assume that  $\gamma$ in \eqref{gencurve} satisfies
	\begin{equation}\label{gammas1}
		\begin{array}{lll}
			\gamma(0)&\!\!\!=\!\!\!& \disp (u(0),y(0),z(0))\,\,\,\,=\,\,(1,0,0),
			\\[0.2cm]
			\gamma'(0)&\!\!\!=\!\!\!& \disp (u'(0),y'(0),z'(0))=\,\,(0,1,0),
		\end{array}
	\end{equation}
	so that
	$\nu(p,0) = (0;0,-1).$
	Consider the reflection $R$ of $\S^{2n+1}$ given by 
	$$
	R(x_1,\dots,x_{2n};x_{2n+1},x_{2n+2})=(x_1,\dots,x_{2n};-x_{2n+1},x_{2n+2}),
	$$
	which fixed $\partial M$ and $\nu(p,0)$. By a direct  computation we see that $R\circ f$ is also a rotationally symmetric solution to \eqref{eq_uprime}. Observe that the rotationally symmetric hypersurfaces given by $f$ and $R\circ f$ meet along the unit sphere
	$$
	\S^{2n+1}\cap\{x_{n+1}=0=x_{n+2}\}.
	$$
	From \eqref{gammas1}, \eqref{normalrot}, \eqref{pb}, and \eqref{eq_eigenvalues}, we see that the
	map
	$\hat f: \mathbb{S}^{2n-1} \times\R \to \mathbb{S}^{2n+1},$
	given by
	\[
	\widehat f(p,s) = \left\{ \begin{array}{ll}
	f(p,s), & \text{if } \, s \ge0, \\[0.2cm]
	R \circ f(p,-s), & \text{if } \, s \le0,
	\end{array}
	\right.
	\]
	is a $C^2$-smooth hypersurface which solves \eqref{eq_uprime}.
	Elliptic regularity implies that $\hat f$ is even $C^{\infty}$-smooth. Fact 1 and \eqref{pb} guarantee that $u(s) \to u_n$ and $u'(s) \to 0$ as $s \to \infty$. Since $T_{2n-1,1}$ is invariant by $R$ and by the rotation $\tau$,
	the map $\hat{f}$ therefore satisfies the properties in Remark \ref{rem_wraps}, i.e. it wraps around $T_{2n-1,1}$ both as $s \to \infty$ and as $s \to -\infty$. Concluding, by \eqref{metricwarped}
	the metric induced by $\hat f$ is 
	$$
	g = ds^2 + \hat{u}(s)^2g_{_\mathbb{S}},
	$$
	with $g_{_\mathbb{S}}$ the round metric on $\mathbb{S}^{2n-1}$ and $\hat{u}$
	a positive smooth function globally defined on $\R$. Therefore, by
	\cite[Lemma 40, p. 209]{neill}, the metric $g$ is complete.

\section{Rigidity theorems}
In this section we give the proofs of our main rigidity results.

\subsection{Proof of Theorem \ref{THMC}} Without loss of generality we may assume that $H\ge 0$ on $E$. By \eqref{lap-H} and the strong maximum principle, either $H \equiv 0$ on $E$ or $H>0$ everywhere. The first case does not occur by assumption. Up to removing a compact neighbourhood of $\partial E$ with smooth boundary, we can therefore assume that $\partial E$ is smooth and that $H>0$ on $\overline{E}$.

Recall that the Gauss curvature $K$ of $M^2$ is given
by the formula
$$
K=1-\frac{|A|^2}{2}+\frac{H^2}{2}.
$$
The idea of the proof is inspired by \cite{colbrie}. Consider the metric
$$\overline{\gind}=H^{2\beta}\gind,$$
where $\beta > 0$ is a constant to be determined. We shall prove that $\overline{E}$ is compact. Assume the converse. Then, by a direct adaptation of \cite[Lemma 2.2]{mmrs} to manifolds with a compact boundary (simply replace the chosen origin $o$ there by $\partial E$) we can construct a ``shortest ray'' $\gamma : [0,T) \to \overline{E}$ with $\gamma(0) \in \partial E$ and the following properties:
\begin{enumerate}
	\item[$(i)$] $\gamma$ is a divergent curve parametrized by $\bar g$-unit speed, and $\gamma((0,T)) \subset E$;
	\smallskip
	\item[$(ii)$] $\gamma$ is $\overline{\gind}$-minimizing between any pair of its points;
	\smallskip
	\item[$(iii)$] $(M, \overline{\gind})$ is complete if and only if $T = \infty$.
\end{enumerate}

We will reach a contradiction by showing that, for some $\beta$, $(\overline{E}, \overline{\gind})$ is complete and its sectional curvature is bounded below by a positive constant $c^2$. Indeed, by Bonnet-Myers' argument, in this case the diameter of  $(\overline{E},\overline{\gind})$ does not exceed $\pi/c$, hence $\overline{E}$ should be compact against our assumption.

Keeping in mind \eqref{lap-H}, we see that
the Gauss curvature of the $\overline{\gind}$-metric is given by
\begin{eqnarray*}
\overline{K}&\!\!\!=\!\!\!&H^{-2\beta}\big(K-\Delta\log H^{\beta}\big)\\
&\!\!\!=\!\!\!&H^{-2\beta}\left\{1+\Big(\beta-\frac{1}{2}\Big)|A|^2+\frac{H^2}{2}+
\beta\frac{\langle\xi^{\top},\nabla H\rangle}{H}+\beta\frac{|\nabla H|^2}{H^2}\right\}.
\end{eqnarray*}
Using formula \eqref{xitop} and Young's inequality, we get
\begin{eqnarray*}
\beta\frac{\langle\xi^{\top},\nabla H\rangle}{H}&\!\!\!\ge\!\!\!&
 -\frac{\beta}{2}\frac{2|\xi^{\top}|\,|\nabla H|}{H}
 \ge-\frac{\beta(1-H^2)}{2\delta}-\frac{\beta\delta}{2}\frac{|\nabla H|^2}{H^2},
\end{eqnarray*}
where $\delta$ is a positive constant  such that
\begin{equation}\label{choisebd}
{1}/{4}\le{\beta}/{2}<\delta\le 2.
\end{equation}
Furthermore, note that
$${|H|^{-2\beta}}\ge 1\quad\text{and}\quad |A|^2\ge {H^2}/{2}.$$
From the above estimates and the choice of the constants in \eqref{choisebd}
we deduce that
\begin{eqnarray}\label{gaussnew}
\overline{K}&\!\!\!\ge\!\!\!& \frac{1}{H^{2\beta}}\left\{1-\frac{\beta}{2\delta}+
\Big(\frac{\beta}{2}+\frac{1}{4}+\frac{\beta}{2\delta}\Big)H^2
+\beta\Big(1-\frac{\delta}{2}\Big)\frac{|\nabla H|^2}{H^2}\right\}\\
&\!\!\!\ge\!\!\!&1-\frac{\beta}{2\delta}+
\Big(\frac{\beta}{2}+\frac{1}{4}+\frac{\beta}{2\delta}\Big)H^2
+\beta\Big(1-\frac{\delta}{2}\Big)\frac{|\nabla H|^2}{H^2}.\nonumber\\
&\!\!\!\ge\!\!\!&1-\frac{\beta}{2\delta}\nonumber\\
&\!\!\!>\!\!\!&0.\nonumber
\end{eqnarray}
Denote by $s$ and $\overline{s}$, respectively, the $\gind$- and $\overline{\gind} $-arclengths of $\gamma$. Since $\gamma$ is divergent and $(\overline{E},\gind)$ is complete, in $\gind$-arclength the curve $\gamma$ is parametrized for $s \in [0,\infty)$. By property $(iii)$, to check that $(\overline{E},\overline{\gind})$ is complete it is enough to show that
$$
T = \int_0^{\infty}H^{\beta}\, d s=\infty.
$$
Since $\gamma$ is a minimizing $\overline{\gind}$-geodesic, the second variation gives that 
\begin{equation}\label{need1}
\int_{0}^{T}\big\{(\varphi_{\overline{s}})^2-\overline{K}\,\varphi^2\big\}
\,d\overline{s}\ge 0,\quad
\text{for every}\,\,\,\,\varphi\in C^{\infty}_0\big([0,T)\big),
\end{equation}
where $C^{\infty}_0\big([0,T)\big) = \{ \varphi \in C^\infty_c([0,T)) : \varphi(0) = 0\}$. Observe that
$$
\partial_{\overline{s}}=H^{-\beta}\partial_s, \qquad d\overline{s}=H^{\beta}ds, \qquad \varphi_{\overline{s}}
=H^{-\beta}\varphi_s.
$$
Setting $\varphi'=d\varphi/ds$,
the inequality \eqref{need1} becomes
\begin{equation}\label{need2}
\int_{0}^{\infty}\big\{(\varphi')^2H^{-\beta}-\overline{K}H^{\beta}\varphi^2\big\}\,ds\ge 0,\quad
\text{for every}\,\,\,\,\varphi\in C^{\infty}_0\big([0,\infty)\big).
\end{equation}
Set now $\varphi=H^{\beta}\psi$, where $\psi$ is a test-function to be determined later.
Then
$$
\varphi'=\beta H^{\beta-1}H'\psi+H^{\beta}\psi'.
$$
From \eqref{need2} and \eqref{gaussnew}, we obtain that
\begin{eqnarray}\label{need3}
&&\int_0^{\infty}\big\{H^\beta(\psi')^2+\beta^2H^{\beta-2}(H')^2\psi^2+
2\beta H^{\beta-1}H'\psi\psi'\big\}\,ds\nonumber\\
&&\quad\,\,\,\,\,\,\ge\int_0^{\infty}H^{\beta}\psi^2\left\{1-\frac{\beta}{2\delta}+
\Big(\frac{\beta}{2}+\frac{1}{4}+\frac{\beta}{2\delta}\Big)H^2
+\beta\Big(1-\frac{\delta}{2}\Big)\frac{(H')^2}{H^2}\right\}\,ds.
\end{eqnarray}
Choosing
\begin{equation}\label{choisebd1}
\delta=2-2\beta,\quad \delta>2/5\quad\text{and}\quad \beta<4/5,
\end{equation}
all the conditions given in \eqref{choisebd} are satisfied. Moreover,
the inequality \eqref{need3} becomes
\begin{equation}\label{need5}
\int_0^{\infty}\big\{H^{\beta}(\psi')^2+2\beta H^{\beta-1}H'\psi\psi'\big\}\,ds\ge
\frac{4-5\beta}{2\delta}\int_0^{\infty}H^{\beta}\psi^2\,ds.
\end{equation}
Fix a constant $r>1$ and choose now a test-function $\psi$ of the form
$$\psi(s)=s\varrho(s),\quad s\in[0,\infty),$$
where $\varrho\in C^{\infty}_c([0,\infty))$ is a decreasing positive function which satisfies
$$
\left\{ \begin{array}{clcl}
\varrho(s)&\!\!\!=\!\!\!&1& \mbox{for $s\in[0,r]$},\\
\vspace{2pt}
\varrho(s)&\!\!\!=\!\!\!&0 & \mbox{for $s\in[2r,\infty)$},\\
\vspace{2pt}
|\varrho'(s)|&\!\!\!\le \!\!\!&c/r& \mbox{for $s\in[r,2r]$},\\
\vspace{2pt}
|\varrho''(s)|&\!\!\!\le \!\!\!&c/r^{2}& \mbox{for $s\in[r,2r]$}.
\end{array}
\right.
$$
for some constant $c>0$ independent of $r$. Then
$$\psi'=\varrho+s\varrho'\quad\text{and}\quad\psi''=2\varrho'+s\varrho''.$$
From equation \eqref{need5} and integration by parts, we get
\begin{eqnarray*}
\frac{4-5\beta}{2\delta}\int_{1}^{r}H^{\beta}\,ds
&\!\!\!\le\!\!\!&\frac{4-5\beta}{2\delta}\int_{1}^{r}H^{\beta}s^2\varrho^2\,ds\\
&\!\!\!\le\!\!\!&\frac{4-5\beta}{2\delta}\int_{0}^{2r}H^{\beta}s^2\varrho^2\,ds
=
\frac{4-5\beta}{2\delta}\int_{0}^{2r}H^{\beta}\psi^2\,ds\\
&\!\!\!\le\!\!\!&
\int_{0}^{2r}\big\{H^{\beta}(\psi')^2+(H^{\beta})'(\psi^2)'\big\}\,ds
=\int_{0}^{2r}H^{\beta}\big\{(\psi')^2-(\psi^2)''\big\}\,ds\\
&\!\!\!=\!\!\!&\int_{0}^{2r}H^{\beta}\big\{-2\psi\psi''-(\psi')^2\big\}\,ds
\le-\int_{0}^{2r}2H^{\beta}\psi\psi''\,ds\\
&\!\!\!\le\!\!\!&\int_r^{2r}H^{\beta}|2s\varrho\varrho'+2s^2\varrho\varrho''|\,ds\\
&\!\!\!\le\!\!\!& 4c\int_r^{2r}H^{\beta}\,ds.
\end{eqnarray*}
Consequently,
$$
\frac{4-5\beta}{2\delta}\int_{1}^{r}H^{\beta}\psi^2ds\le
 4c\int_r^{2r}H^{\beta}ds
 \le
 4c\int_r^{\infty}H^{\beta}ds.
$$
If the right-hand side is finite, then by letting $r \to \infty$ we reach a contradiction. Therefore, 
$$
\int_1^{\infty}H^{\beta}ds=\infty,
$$
hence $\overline{\gind}$ is complete, and this completes the proof.

\subsection{Proof of Theorem \ref{THMB}}

We begin with the following:

\begin{proposition}\label{prop_clifford}
	Assume that $f : M^2 \to \mathbb{S}^3$ is a complete Hopf soliton without boundary. If there exists an open subset $U \subset M$ so that $H$ does not change sign and vanishes somewhere on $U$, then $f$ is a covering of a Clifford torus $T_{1,1}$.	
\end{proposition}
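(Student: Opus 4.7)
The plan is to first establish that $H$ vanishes identically on $M$, and then to invoke the classical fact, recalled in the introduction, that the only complete minimal surface of $\S^3$ tangent to the Hopf vector field is (a covering of) the Clifford torus $T_{1,1}$.

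Without loss of generality assume $H \ge 0$ on $U$, and let $p_0 \in U$ be a point where $H(p_0) = 0$. Lemma \ref{crlemma}(3) reads
\[
\Delta H + \langle \xi^\top, \nabla H\rangle + |A|^2 H = 0,
\]
which is a linear, second order, uniformly elliptic equation with locally bounded coefficients. Since $H \ge 0$ attains the nonpositive interior minimum $0$ at $p_0$, the strong maximum principle (applicable to operators with nonnegative zero-order term whenever the tested extremum is nonpositive) forces $H \equiv 0$ on the connected component of $U$ containing $p_0$. Aronszajn's unique continuation theorem then propagates this vanishing to the whole connected manifold $M$, so $H \equiv 0$ on $M$.

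Once $H \equiv 0$, Lemma \ref{crlemma}(1) yields $|\xi^\top|^2 = 1 - H^2 \equiv 1$, whence $\xi$ is everywhere tangent to $M$. Thus $M$ is a complete minimal surface of $\S^3$ tangent to the Hopf vector field, and Pinkall's theorem (cited in the introduction) finishes the proof. For a self-contained conclusion, one may instead reason as follows: choose a local unit frame $\{\xi,e\}$ of $TM$. Since $\phi$ is skew-symmetric, isometric on the horizontal bundle, and vanishes on $\xi$, the vector $\phi(e)$ is a unit vector orthogonal to both $\xi$ and $e$, hence $\phi(e) = \pm\nu$. Combining this with $\overline\nabla_X \xi = -\phi(X)$ gives $\langle A\xi,\xi\rangle = 0$ and $\langle Ae,\xi\rangle = \mp 1$; together with minimality $\langle Ae,e\rangle = 0$, this forces the principal curvatures to be the constants $\pm 1$, so $|A|^2 \equiv 2$ and the Gauss equation yields $K \equiv 0$. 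The surface is then a complete isoparametric minimal surface of $\S^3$ which, either by direct classification or by projecting along the Hopf fibration to a great circle in $\S^2$, must be a covering of $T_{1,1}$.

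The main technical hurdle is the passage from the merely local vanishing of $H$ on the (possibly small) set $U$ to its vanishing on all of $M$: this is resolved by combining the strong maximum principle on the component of $U$ containing $p_0$ with Aronszajn's unique continuation across $M$, both available thanks to the linear elliptic character of the drifted PDE satisfied by $H$.
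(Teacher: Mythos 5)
Your proof is correct, and while the first half coincides with the paper's argument, the second half takes a genuinely different route. Like the paper, you obtain $H\equiv 0$ from the drifted equation $\Delta_{\xi^\top}H=-H|A|^2$ via the strong maximum principle on $U$ (your remark on handling the nonnegative zero-order coefficient $|A|^2$ when the extremum is $0$ is the right technical point) followed by unique continuation on all of $M$. Where you diverge is in deducing rigidity from $H\equiv 0$: the paper uses the Killing-field identity ${\rm Ric}(\xi,\xi)=|\nabla\xi|^2-\tfrac12\Delta|\xi|^2$ to get $K\ge 0$, invokes Huber's theorem to conclude parabolicity, and then applies Simons' inequality $\Delta|A|^2=2(2-|A|^2)|A|^2+2|\nabla A|^2\ge 0$ to force $|A|^2$ constant, equal to $0$ or $2$, before excluding the totally geodesic case. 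Your self-contained computation is more direct and arguably sharper: in the frame $\{\xi,e\}$ the identities $\overline\nabla_X\xi=-\phi(X)$, $\phi(\xi)=0$ and $\phi(e)=\pm\nu$ (valid precisely because the ambient dimension is $3$) give $\langle A\xi,\xi\rangle=0$, $\langle Ae,\xi\rangle=\mp 1$, and minimality then pins the principal curvatures to $\pm 1$ pointwise, so $|A|^2\equiv 2$ with no appeal to parabolicity or to Simons' formula. Both routes then finish with the Lawson/Chern--do Carmo--Kobayashi (or isoparametric) classification and Ambrose's theorem; you should make the Ambrose step explicit rather than leaving it at ``must be a covering.'' One caution: your first suggested shortcut, citing ``the fact recalled in the introduction,'' is delicate, since the introduction offers this very proposition as one of its two sources for that fact; your fallback citation of Pinkall, and better still your explicit frame computation, avoid any circularity.
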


\begin{proof}
	By \eqref{lap-H} and Hopf's strong maximum principle, we get that $H\equiv 0$ on $U$, hence on the entire $M$ by unique continuation. This implies that $\xi$ is everywhere tangent to $M^2$
	and, in fact, it gives rise to a
	Killing vector field on $M^2$. According to \cite[Theorem 8.2.2]{petersen} the Ricci curvature of $M^2$ in the direction of $\xi $ is given by 
	$$
	{\rm Ric}(\xi, \xi)=  |\nabla\xi|^2-\frac{1}{2}\Delta |\xi|^2.
	$$
	This implies that the Gauss curvature $K$ of the surface satisfies
	\begin{equation}\label{Gauss1}
		K=|\nabla\xi|^2\ge 0.
	\end{equation}
	Consequently, the manifold $M^2$ is parabolic, see \cite{huber}.
	On the other hand, from the Gauss equation we have that
	\begin{equation}\label{Gauss2}
		2K=2-|A|^ 2.
	\end{equation}
	Combining \eqref{Gauss1} and \eqref{Gauss2}, we conclude $M$ is a complete minimal
	parabolic surface with
	$$|A|^ 2\le 2.$$
	In this case Simons' equation becomes
	$$
	\Delta|A|^ 2= 2(2-|A|^ 2 ) |A|^ 2 + 2|\nabla A | ^ 2\ge 0.$$
	From parabolicity we conclude that either
	$|A|^ 2 \equiv 0$ or $|A|^2\equiv 2$.
	If $|A|\equiv 0$, then $f(M^2)$ is a great sphere of $\S^3$. However this possibility cannot occur, because $\xi$ cannot be everywhere
	tangent to $\mathbb{S}^2$. If $|A|^2\equiv 2$, Lawson and Chern-Do Carmo-Kobayashi's rigidity theorem \cite[Theorem 1]{lawson}, \cite{chern} imply that $f(M^2)$ is a
	Clifford torus $T_{1,1}$. As $f$ is a local isometry and $M$ is complete, by Ambrose's theorem $f$ is onto and a Riemannian covering.
\end{proof}

We now conclude the proof of Theorem \ref{THMB}. Let $K$ be the compact set in the statement. If $M$ itself is compact, by Proposition \ref{prop_div_xiT} it follows that $M$ is minimal. Applying Proposition \ref{prop_clifford} we get that $M$ covers $T_{1,1}$. If $M$ is not compact, let $E$ be an end with respect to $K$, that is, a connected component of $M \backslash K$ with noncompact closure. In our assumption, $H$ does not change sign on $E$. By Theorem \ref{THMC}, $H$ should vanish identically on $E$ and we can apply again Proposition \ref{prop_clifford} to conclude that $M$ covers $T_{1,1}$.

\subsection{Proof of Theorem \ref{THMD}}

Suppose that there exists a point $x_0\in M$ such that $|\mathring A|^ 2 (x_0)$ is a maximum for $|\mathring A|^ 2$. According to \eqref{lap-tracelessA} and our hypothesis, we get that 
\begin{equation*}
\lap _{\xi ^ \top }|\mathring A|^ 2 =2(2n-|A|^ 2 )|\mathring A|^ 2 +2|\nabla \mathring A|^ 2 \ge 0,
\end{equation*}
which implies, using the maximum principle, that $|\mathring A|^2 $ must be constant. Back to the above equation we get that the left hand side should be zero, which implies that either $|\mathring A|^ 2 \equiv 0 $, or $|A|^ 2 = 2n $ and $|\nabla \mathring A|^ 2 \equiv 0$. The first case corresponds to a totally umbilic sphere, which by Proposition \ref{prop_div_xiT} must be totally geodesic. However, as $\xi$ should be tangent to $M$, totally geodesic spheres of even dimension cannot be Hopf solitons. It remains to consider the second case. 
From equation \eqref{lap-A1} we get that 
\[
0\equiv \lap _{\xi ^ \top }| A|^ 2=-H^ 2 + |\nabla A | ^ 2 = -H^ 2 + |\nabla \mathring A|^ 2 + \frac{|\nabla H |^ 2}{2n}=-H^ 2 + \frac{|\nabla H |^ 2}{2n},
\]
where we used that
$$|\nabla A|^ 2 = |\nabla \mathring A|^ 2 +\frac{|\nabla H |^ 2}{2n}.$$
The above equation implies that on $M$ it holds
$$H^ 2 \equiv  \frac{|\nabla H |^ 2}{2n}.$$
Using equation \eqref{lap-H} we compute $$\lap _{\xi ^ \top }H^ 2 = -2H^ 2 |A|^ 2 + 2 |\nabla H |^ 2$$ and replacing
$H^ 2 \equiv  |\nabla H |^ 2/(2n)$ and $|A|^ 2 \equiv 2n,$
 above we obtain that
 $ \lap _{\xi ^ \top }H^ 2 \equiv 0,$
 therefore $H^ 2 $ is a harmonic function on $M$. 
Since in this case $|A|^ 2 \equiv 2n$ and
$$|\mathring A|^ 2 =|A|^ 2 -\frac{H^ 2}{2n}$$
attains a maximum, it means that $H^ 2 $ attains a minimum, and being $H^2$ a harmonic function, it means that $H^ 2$ must be constant, and so is $H$. Again from \eqref{lap-H} we have that
$$\lap _{\xi ^ \top }H =-H|A|^ 2,$$
which implies that $H\equiv 0$. Since $|A|^ 2\equiv 2n$, the rigidity results in Lawson \cite{lawson} and Chern, do Carmo and Kobayashi \cite{chern} assure that locally $M$ 
coincides with a Clifford torus $T_{a,b}$ for some $a,b$ with $a+b = 2n$. Since $T_{a,b}$ is the zero set of a suitable real analytic function on
$F: \mathbb{S}^ {2n+1}\to \mathbb{R}$ (see for instance \cite[Example 3, p.194]{nomizu}), and a minimal immersion in the sphere is real analytic as well, the restriction of $F$ 
to $M$ must vanish identically. Hence, the image of $M$ is contained in a single $T_{a,b}$. As $f : M^ {2n} \to T_{a,b}$ is a local isometry and $M$ is complete, Ambrose's Theorem guarantees that $f$ is onto and a Riemannian covering, which proves our claim.

\end{document}